\numberwithin{equation}{section}
\newtheorem{claim*}{Claim}[section]
\newtheorem{definition}{Definition}[section]
\newtheorem{lemma}{Lemma}[section]
\newtheorem{theorem}{Theorem}[section]
\newtheorem{corollary}{Corollary}[section]
\newtheorem{remark}{Remark}
\newcommand\numberthis{\addtocounter{equation}{1}\tag{\theequation}}
\DeclareMathOperator{\N}{\mathbb{N}}
\DeclareMathOperator{\E}{\mathbb{E}}
\DeclareMathOperator{\Q}{\mathcal{Q}}
\DeclareMathOperator{\Pb}{\mathbb{Pb}}
\DeclareMathOperator{\VAR}{Var}
\DeclareMathOperator{\PAR}{Par}
\DeclareMathOperator{\BM}{\mathbb{B}{\rm m}}
\newcommand{\vb}{\vspace{3mm}}
\DeclarePairedDelimiter{\ceil}{\lceil}{\rceil}
\title{Transient analysis of one-sided L\'evy-driven queues}
\author{
N. Starreveld, R. Bekker, and M. Mandjes
}
\date{\today}
\begin{document}

\maketitle

\begin{abstract}
\noindent
In this paper we analyze the transient behavior of the workload process in a L\'evy input queue. We are interested in the value of the workload process at a random epoch; this epoch is distributed as the sum of independent exponential random variables. We consider both cases of spectrally one-sided L\'evy input processes, for which we succeed in deriving explicit results. As an application we approximate the mean and the Laplace transform of the workload process after a deterministic time.
\newline \mbox{} \newline
\noindent {\bf Keywords:} Queueing $\circ$ L\'evy processes $\circ$  fluctuation theory $\circ$ spectrally one-sided input $\circ$  transient analysis

\vb

\noindent
{\bf Affiliations:} N. Starreveld is with Korteweg-de Vries Institute for Mathematics, Science Park 904, 1098 {\sc xh} Amsterdam, 
University of Amsterdam, the Netherlands. {\it Email}: {\tt N.J.Starreveld@uva.nl}. 

\noindent
R. Bekker is with Department of Mathematics,
VU University Amsterdam,
De Boelelaan 1081a, 1081 {\sc hv} Amsterdam, The Netherlands.
{\it Email}: {\tt r.bekker@vu.nl}.

\noindent
M. Mandjes is with Korteweg-de Vries Institute for Mathematics,
University of Amsterdam, Science Park 904, 1098 {\sc xh} Amsterdam, the Netherlands. He is also affiliated with
{\sc Eurandom}, Eindhoven University of Technology, Eindhoven, the Netherlands, and CWI, Amsterdam the Netherlands. 
{\it Email}: {\tt m.r.h.mandjes@uva.nl}.

\noindent
The research of N.\ Starreveld and M.\ Mandjes is partly funded by the NWO Gravitation project NETWORKS, grant number 024.002.003.

\end{abstract}

\section{Introduction}
This paper studies the transient workload in a queue fed by a L\'evy input process 
$X=\{X_t\}_{t\geq0}$; here the workload process, in the sequel  
denoted by $\{\Q_t\}_{t\geq0}$, is defined as the reflection of $X$ at zero. 
This workload process can be constructed from the input process $X$ as the (unique) 
solution of the so-called Skorokhod problem, \cite{Man, Skor1, Skor2}. It turns out that the process
$\Q$ follows from $X$ through
\[\Q_t = X_t + \max\{\Q_0,\mathcal{L}_t\},\]
where
\[\mathcal{L}_t:=\sup_{0\leq s\leq t}-X_s = - \inf_{0\leq s\leq t}X_s.\]
The process $\{\mathcal{L}_t\}_{t\geq0}$ is often referred to as \textit{local time (at zero)} or \textit{regulator process} \cite{Kyp}. 

As mentioned above, we are interested in the transient behavior of the {workload process}. In queueing theory transient analysis is a classical topic that is treated in various standard textbooks; see e.g.\ \cite{Asm, Cohen, Pra}. Typically, transient analysis is important in situations where the time horizon considered is relatively short, so that it cannot be ensured that the system is `close to stationarity'. 
In addition, transient results are useful in cases that the net-input process changes over time; it for instance facilitates the analysis of systems with time-varying demand as well as the assessment of the impact of specific workload control mechanisms. In general, transient analysis allows us to assess the impact of the initial state ${\mathcal Q}_0$. 

The main contribution of this paper is the generalization of the existing results on the transient behavior of the workload process $\{\Q_t\}_{t\geq0}$. We consider $n$ exponentially distributed random variables $T_1,\ldots,T_n$ with parameters $q_1,\ldots,q_n$ and we analyze the joint behavior of the vector
\[\left(\Q_{T_1},\Q_{T_1+T_2},\ldots,\Q_{T_1+\ldots+T_n}\right),\] with a specific focus on $\Q_{T_1+\ldots+T_n}$. 
It is noted that this also directly yields $\Q_T$ when $T$ follows a {\it Coxian} distribution; see Section~\ref{sec:concl} for some additional background on this claim. This observation is particularly useful owing to the fact that any distribution on the positive half line can be approximated arbitrarily closely by 
 a sequence of Coxian distributions (in the sense of convergence; see e.g. \cite[Section III.4]{Asm}).
For the case of a spectrally positive input process, the results are given in terms of the Laplace-Stieltjes transform (LST), whereas we find an expression for the associated density for the spectrally negative case.
%We study the Laplace-Stieltjes transform (LST) instead of the process itself, since more explicit results can be derived. Using Laplace inversion techniques \cite{Aba}, information about the process can then be inferred from the LST. 

Apart from the general results obtained, a second contribution lies in the reasoning behind our proofs. More specifically, our proofs reveal that the above formulas obey an elegant and simple tree structure. The transient workload behavior consists of $2^n$ terms that can be recursively evaluated. We prove our results by induction; given that we know the expression for the quantity under consideration at $n-1$ exponential epochs, we derive the expression at $n$ exponential epochs. In this induction step, from $n-1$ to $n$ that is, it can be seen how each term produces two offsprings, thus giving insight into the underlying structure. The idea behind the proofs yields a \textit{mechanism} to address questions related to transient analysis at random epochs, which may help in obtaining a deeper understanding of the behavior of the underlying continuous-time queueing system.

Transient analysis of queueing systems started with the analysis of the waiting times in the M/M/1 queue \cite{Pra}. In \cite{Ben, Tak} the authors analyze the Laplace-Stieltjes transform of the waiting time process in the M/G/1 queue. The argument used there is also applied in \cite{Man}, so as  to derive Theorem \ref{Spectrally Positive One} below for the case of a compound Poisson input process. The transient analysis of L\'evy driven queues is of a much more recent date; see e.g.\ \cite{Asm, Man, Kel} for results on
the workload process in a L\'evy-driven queue at an exponential epoch (which are briefly summarized in Section~\ref{sec:pre}). As a direct application the authors of \cite{Box} study clearing models,  where special attention is paid to clearings at exponential epochs (relying on  results on  the workload at an exponential epoch in an M/G/1 setting).

Concerning the structure of the paper, in Section 2 we present our notation, as well as the preliminaries that are needed in order to prove our results.  In Section 3 we present the main results of the paper, which are Thms.~\ref{MainTheorem} and \ref{spectrally negative general case density}. We support the final results with intuitive arguments based on a tree structure; the proofs can also be interpreted along those lines.
In Section 4 we present results obtained in numerical experiments. Section 5 contains the proofs of Thms.~\ref{MainTheorem} and \ref{spectrally negative general case density}. In all sections, the spectrally positive and spectrally negative cases are treated separately. Finally, Section~\ref{sec:concl} contains conclusions and a brief discussion.

\section{Model, Notation, and Preliminaries} \label{sec:pre}

In this section we present the workload at an exponential epoch for queues with spectrally positive (Section~\ref{subs:pre-pos}) and spectrally negative (Section~\ref{subs:pre-neg}) L\'evy input processes. These results are heavily relied upon throughout the paper, and in addition serve as a benchmark. In passing, we also introduce our notation. 

\subsection{Spectrally positive L\'evy processes} \label{subs:pre-pos}

As mentioned, the building block of this paper is a L\'evy process $X=\{X_t\}_{t\geq0}$. In case $X$ is a spectrally positive process, henceforth denoted by $X\in\mathscr{S}_+$, the \textit{Laplace exponent} $\phi(\alpha):=\log\E e^{-\alpha X_1}$ is well defined for all $\alpha \geq0$. By applying H\"{o}lder's inequality we get that $\phi(\cdot)$ is convex on $[0,+\infty)$ with slope $\phi'(0)=-\E X_1$ at the origin. In general, the inverse function $\psi(\cdot)$ is not well defined and we work with the right inverse
\[\psi(q):=\sup\{\alpha\geq0: \phi(\alpha)=q\}.\]
For the case the drift of our driving process $X$ is negative we observe that $\psi'(0)=-\E X_1>0$ and thus $\phi(\cdot)$ is increasing on $[0,+\infty)$. In this case the inverse function $\psi(\cdot)$ is well defined. 

Our interest is in the transient behavior of the \textit{workload process} $\{\Q_t\}_{t\geq0}$. We consider an exponentially distributed random variable $T$ with parameter $q$ (sampled independently from the L\'evy input process) and focus on the transform $\E_x e^{-\alpha \Q_T}$, where $\alpha\geq0$ and $x$ denotes the initial workload. In this case the transform $\E_x e^{-\alpha \Q_T}$ is explicitly known, and is given in the following theorem \cite{Man, Kel, Tak}.

\begin{theorem}\label{Spectrally Positive One}
Let $X\in\mathscr{S}_+$ and let $T$ be exponentially distributed with parameter $q$, independently of $X$. For $\alpha\geq0$, $x\geq0$,
\[\E_x e^{-\alpha\Q_T}=\int_0^\infty q e^{-q t}\E_x e^{-\alpha\Q_t}{\rm d}t=\frac{q}{q-\phi(\alpha)}\left(e^{-\alpha x}-\frac{\alpha}{\psi(q)}e^{-\psi(q)x}\right).\]
\end{theorem}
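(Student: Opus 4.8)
The plan is to exploit the Kella--Whitt martingale for the reflected (workload) process and then to pin down the single remaining unknown by an analyticity argument. Writing $\Q_t = X_t + V_t$ with regulator $V_t := \max\{x,\mathcal{L}_t\}$, which is nondecreasing and increases only on the set $\{t:\Q_t=0\}$, the Kella--Whitt identity gives that
\[M_t := \phi(\alpha)\int_0^t e^{-\alpha \Q_s}\,{\rm d}s + e^{-\alpha x} - e^{-\alpha \Q_t} - \alpha\int_0^t e^{-\alpha \Q_s}\,{\rm d}V_s\]
is a zero-mean martingale under $\mathbb{P}_x$. Because ${\rm d}V_s$ is carried by $\{\Q_s=0\}$, where $e^{-\alpha\Q_s}=1$, the last integral collapses to $Z_t := V_t - x = \max\{x,\mathcal{L}_t\}-x$, the cumulative local time accrued up to $t$.

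First I would take expectations in $\E_x M_t=0$ and integrate the resulting family of identities against the density $q e^{-qt}$ over $t\in(0,\infty)$. Writing $f(\alpha):=\E_x e^{-\alpha\Q_T}$ and using Fubini together with the elementary identity $\int_0^\infty q e^{-qt}\int_0^t g(s)\,{\rm d}s\,{\rm d}t = q^{-1}\int_0^\infty q e^{-qs} g(s)\,{\rm d}s$, the time-integral term becomes $q^{-1}\phi(\alpha)f(\alpha)$, the term $e^{-\alpha\Q_t}$ becomes $f(\alpha)$, and the local-time term becomes $\alpha\,\E_x Z_T$. This yields the single scalar equation
\[\frac{\phi(\alpha)}{q}f(\alpha)+e^{-\alpha x}-f(\alpha)-\alpha\,\E_x Z_T=0,\]
which I would solve for $f(\alpha)$ to obtain
\[f(\alpha)=\frac{q}{q-\phi(\alpha)}\bigl(e^{-\alpha x}-\alpha\,\E_x Z_T\bigr).\]
At this stage the transform is known up to the single constant $c:=\E_x Z_T$.

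The crux is to identify $c$. Here I would invoke analyticity: since $f(\alpha)=\E_x e^{-\alpha\Q_T}\in[0,1]$ is finite for every $\alpha\ge0$, the right-hand side cannot have a genuine pole on $[0,\infty)$, so its numerator must vanish at the zero of its denominator. By convexity of $\phi$ with $\phi(0)=0$ and $\phi(\alpha)\to\infty$, the equation $\phi(\alpha)=q$ has, for $q>0$, a unique nonnegative root, namely $\alpha=\psi(q)>0$, which is exactly the point where $q-\phi(\alpha)=0$. Forcing $e^{-\psi(q)x}-\psi(q)\,c=0$ gives $c=e^{-\psi(q)x}/\psi(q)$, and substituting this back produces precisely the claimed expression. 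The main obstacle is this last step: one must argue that $\psi(q)$ is the \emph{only} zero of $q-\phi(\alpha)$ in the region where $f$ is analytic, so that the singularity is removable and no spurious pole survives, and one must justify the interchanges of expectation and integration; both are controlled by the boundedness of $e^{-\alpha\Q_s}$ for $\alpha,\Q_s\ge 0$ and the finiteness of $\E_x\mathcal{L}_t$. The first equality in the statement, $\E_x e^{-\alpha\Q_T}=\int_0^\infty q e^{-qt}\E_x e^{-\alpha\Q_t}\,{\rm d}t$, is simply the definition of randomising $\Q$ at the independent $\exp(q)$ epoch $T$, combined with Tonelli.
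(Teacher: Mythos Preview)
Your argument is correct and is essentially the standard Kella--Whitt derivation. Note, however, that the paper does \emph{not} supply its own proof of this theorem: it is stated in Section~\ref{sec:pre} as a known preliminary result, with references to \cite{Man, Kel, Tak}. So there is no ``paper's proof'' to compare against; you have filled in a proof the authors chose to cite rather than reproduce.

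For completeness, the route you take (Kella--Whitt martingale plus removal of the apparent pole at $\alpha=\psi(q)$) is precisely the argument used in the cited reference \cite{Man}. Two minor technical points you flag but could sharpen: (i) Kella--Whitt in general gives only a \emph{local} martingale, and one must check it is a true martingale here; this follows because all terms are integrable for each fixed $t$ (the exponential terms are bounded by $1$, and $\E_x Z_t<\infty$ since for $X\in\mathscr{S}_+$ the running infimum has finite mean, being driven only by the Brownian and drift components). (ii) The pole-removal step uses that $\psi(q)$ is the unique positive root of $\phi(\alpha)=q$ for $q>0$; this is immediate from the strict convexity of $\phi$ on $[0,\infty)$ together with $\phi(0)=0$ and $\phi(\alpha)\to\infty$.
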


Using Laplace inversion techniques \cite{Aba}, information about the process can then be inferred from the LST as it uniquely determines the distribution of $\Q_t$, for each $t$ and any initial workload $x$.

\subsection{Spectrally negative L\'evy processes} \label{subs:pre-neg}

For a spectrally negative  L\'evy process $X$, henceforth denoted by $X\in\mathscr{S}_-$, we define the \textit{cumulant} $\Phi(\beta):=\log\E e^{\beta X_1}$. This function is well-defined and finite for all $\beta\geq0$, exactly because there are no positive jumps. We observe that $\Phi(\cdot)$ has slope $\Phi'(0)=\E X_1$ at the origin, thus $\Phi(\beta)$ in general is no bijection on $[0,+\infty)$. We define the right inverse through
\[\Psi(q):=\sup\{\beta\geq0:\Phi(\beta)=q\}.\]
%Define $\beta_0:=\Psi(0)$, which plays an important role when analyzing queues with spectrally negative input.

When working with spectrally negative L\'evy processes, the so-called $q$-scale functions, $W^{(q)}(\cdot)$ and $Z^{(q)}(\cdot)$ play a crucial role, particularly when studying the fluctuation properties of the reflected process \cite{Man, Pal}. For $q\geq0$, let $W^{(q)}(x)$, for $x\geq0$, be a strictly increasing and continuous function whose Laplace transform satisfies
\begin{equation}
\label{W function}
\int_0^\infty e^{-\beta x}W^{(q)}(x){\rm d}x = \frac{1}{\Phi(\beta)-q}, \qquad \beta>\Psi(q);
\end{equation}
we let $W^{(q)}(x)$ equal 0 for $x<0$. From \cite[Th.\ 8.1.(i)]{Kyp} it follows that such a function exists. Having defined the function $W^{(q)}(\cdot)$, we define the function $Z^{(q)}(\cdot)$ as
\begin{equation}
\label{Z scale function}
Z^{(q)}(x) := 1+q\int_0^xW^{(q)}(y){\rm d}y.
\end{equation}

We immediately see the importance of the $q$-scale function in the density of the \textit{workload process} at an exponential epoch, given in the following theorem \cite[Section 4.2]{Man}, which is originally due to Pistorius \cite{Mar}.
\begin{theorem}
\label{spectrally negative one exponential time}
Let $X\in \mathscr{S}_-$ and let $T$ be exponentially distributed with parameter $q$, independently of $X$. For $\alpha\geq0$, $x\geq0$ and $\beta>0$,
\[\Pb_x(\Q_T\in{\rm d}y) = \left(e^{-\Psi(q)y}\Psi(q)Z^{(q)}(x)-qW^{(q)}(x-y)\right){\rm d}y,\]
and
\[\int_0^\infty e^{-\beta x}\E_x e^{-\alpha\Q_T}{\rm d}x =\frac{1}{\beta}\left(\frac{\Psi(q)}{\Psi(q)+\alpha}+\frac{q}{\Phi(\beta)-q}\frac{\Psi(q)-\beta}{\Psi(q)+\alpha}\frac{\alpha}{\alpha+\beta}\right).\]
\end{theorem}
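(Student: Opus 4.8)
The plan is to reduce the statement to an identity about the $q$-resolvent of the reflected process and then to evaluate that resolvent by a regeneration argument at the reflecting barrier. Writing $U^{(q)}(x,{\rm d}y):=\int_0^\infty e^{-qt}\Pb_x(\Q_t\in{\rm d}y)\,{\rm d}t$, the defining property of an independent exponential clock gives $\Pb_x(\Q_T\in{\rm d}y)=q\,U^{(q)}(x,{\rm d}y)$, so it suffices to show $U^{(q)}(x,{\rm d}y)=\big(\tfrac{\Psi(q)}{q}Z^{(q)}(x)e^{-\Psi(q)y}-W^{(q)}(x-y)\big)\,{\rm d}y$. First I would record that, started from $x$, the reflected process coincides with the free process $x+X$ up to the instant $\sigma:=\inf\{t:X_t<-x\}$ at which the running infimum first reaches $-x$; a short check shows $\Q_\sigma=0$, whether $X$ creeps to or jumps across $-x$. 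Hence $\sigma$ is exactly the first time $\Q$ touches the barrier.

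Splitting the resolvent at $\sigma$ and applying the strong Markov property (the post-$\sigma$ path is a reflected process started from $0$, independent of $\mathcal F_\sigma$) yields the renewal decomposition
\[U^{(q)}(x,{\rm d}y)=R^{(q)}(x,{\rm d}y)+\E_x\big[e^{-q\tau_0^-}\big]\,U^{(q)}(0,{\rm d}y),\]
where $R^{(q)}$ is the $q$-resolvent of $X$ killed on first passage below $0$ and $\tau_0^-:=\inf\{t:X_t<0\}$. The two scalar ingredients are classical for $X\in\mathscr S_-$ (see Kyprianou \cite{Kyp}): the killed resolvent has density $e^{-\Psi(q)y}W^{(q)}(x)-W^{(q)}(x-y)$, and $\E_x[e^{-q\tau_0^-}]=Z^{(q)}(x)-\tfrac{q}{\Psi(q)}W^{(q)}(x)$, both obtainable as the $a\to\infty$ limits of the two-sided exit identities over $[0,a]$ together with the asymptotics $W^{(q)}(a-y)/W^{(q)}(a)\to e^{-\Psi(q)y}$ and $Z^{(q)}(a)/W^{(q)}(a)\to q/\Psi(q)$.

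The one genuinely delicate input is the law of the reflected process started from the barrier, $U^{(q)}(0,{\rm d}y)$, since the naive regeneration degenerates at $x=0$ and must be supplied independently. Here I would invoke duality: for any Lévy process and independent exponential $T$ one has $X_T-\inf_{s\le T}X_s\stackrel{d}{=}\sup_{s\le T}X_s$, and for a spectrally negative process the Wiener--Hopf factorization gives $\sup_{s\le T}X_s\sim\mathrm{Exp}(\Psi(q))$. Consequently $\Pb_0(\Q_T\in{\rm d}y)=\Psi(q)e^{-\Psi(q)y}\,{\rm d}y$, that is, $U^{(q)}(0,{\rm d}y)=\tfrac{\Psi(q)}{q}e^{-\Psi(q)y}\,{\rm d}y$. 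I expect this to be the main obstacle, as it is the step where spectral one-sidedness is really used; everything else is bookkeeping. Substituting the three ingredients into the decomposition, the two terms proportional to $W^{(q)}(x)e^{-\Psi(q)y}$ cancel and leave exactly $\big(\Psi(q)Z^{(q)}(x)e^{-\Psi(q)y}-qW^{(q)}(x-y)\big){\rm d}y$, the claimed density.

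Finally, the double transform follows by integrating the density. Using $W^{(q)}(x-y)=0$ for $y>x$, the inner integral gives $\E_x e^{-\alpha\Q_T}=\tfrac{\Psi(q)}{\Psi(q)+\alpha}Z^{(q)}(x)-q\int_0^x e^{-\alpha y}W^{(q)}(x-y)\,{\rm d}y$; taking the Laplace transform in $x$ and using \eqref{W function}, relation \eqref{Z scale function} (so that $\int_0^\infty e^{-\beta x}Z^{(q)}(x){\rm d}x=\tfrac1\beta\tfrac{\Phi(\beta)}{\Phi(\beta)-q}$), and the convolution rule for the last term, produces $\tfrac{\Psi(q)}{(\Psi(q)+\alpha)\beta}\tfrac{\Phi(\beta)}{\Phi(\beta)-q}-\tfrac{q}{(\alpha+\beta)(\Phi(\beta)-q)}$, which rearranges to the stated expression by a one-line algebraic identity. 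This last part is entirely routine.
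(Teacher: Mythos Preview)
Your proof is correct. The renewal decomposition at the first hitting time of the barrier, together with the killed resolvent density, the one-sided exit transform $\E_x[e^{-q\tau_0^-}]=Z^{(q)}(x)-\tfrac{q}{\Psi(q)}W^{(q)}(x)$, and the Wiener--Hopf identification $\Q_T\big|_{\Q_0=0}\stackrel{d}{=}\overline X_T\sim\mathrm{Exp}(\Psi(q))$, indeed produces the claimed density after the cancellation you indicate. Your derivation of the double transform is also correct; the final algebraic rearrangement checks out.

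As for comparison with the paper: there is essentially nothing to compare against. The paper does \emph{not} prove this theorem. It is stated in Section~\ref{sec:pre} as a preliminary result, attributed to Pistorius \cite{Mar} (see also \cite[Section~4.2]{Man}), and the only argument the paper supplies is the one-sentence remark that the transform identity follows from the density ``by a direct computation'' together with analytic continuation from $\beta>\Psi(q)$ to $\beta>0$. Your last paragraph is exactly that direct computation. What you have written for the density is, in spirit, the Pistorius argument: decompose the resolvent of the reflected process at the first visit to the boundary, identify the pre-boundary piece via the killed resolvent, and supply the boundary term from Wiener--Hopf. So your route is the standard one from the cited literature, not an alternative to anything in the present paper.

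One small clarification worth making explicit in your write-up: the killed-resolvent density and the formula for $\E_x[e^{-q\tau_0^-}]$ are obtained as $a\to\infty$ limits of two-sided identities, and these limits require $q>0$ (so that $\Psi(q)>0$ and the asymptotics $W^{(q)}(a)\sim e^{\Psi(q)a}/\Phi'(\Psi(q))$ are available). This is fine here since $T$ is a genuine exponential, but it is the reason the paper insists on analytic continuation for the transform rather than claiming the intermediate Laplace integrals converge for all $\beta>0$ directly.
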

The result on the LST of $\Q_T$ in the above theorem follows for $\beta>\Psi(q)$ by a direct computation from the density of $\Q_T$; by a standard analytic continuation argument the resulting expression then holds for any $\beta>0.$

\section{Main Results}

In this section we present our main results, viz.\ Thms.~\ref{MainTheorem} and \ref{spectrally negative general case density}. In both subsections we first derive the workload behavior at two exponential epochs as this clearly demonstrates how the various terms appear. Then we elaborate on the mechanism for obtaining the workload at $n$ exponential epochs, yielding an intuitively appealing tree structure. The proofs can be interpreted along those lines, and are given in full detail in Section~\ref{sec:proofs}.  

\subsection{Spectrally positive case}\label{section spectrally positive}

Suppose we have a spectrally positive L\'evy process $X$. We want to describe the behavior of the workload process $\{\Q_t\}_{t\geq0}$ at consecutive exponential epochs. We do this by considering exponentially distributed random variables $T_1,\ldots,T_n$ with distinct parameters $q_1,\ldots,q_n$ and calculate, for $\alpha_i\geq0$ and some initial workload $x\geq0$, the joint Laplace transform given by
\begin{equation}
\label{Joint Transform}
\E_x e^{-\alpha_1\Q_{T_1}-\alpha_2\Q_{T_1+T_2}+\ldots+\alpha_n\Q_{T_1+\ldots+T_n}}.
\end{equation}
 
It is instructive to first illustrate how to derive an expression for the joint transform  at two exponential epochs, i.e for $\E_x e^{-\alpha_1\Q_{T_1}-\alpha_2\Q_{T_1+T_2}}$.  From Thm.\ \ref{Spectrally Positive One} we have an expression for the transform $\E_x e^{-\alpha \Q_T}$. Consider now two exponentially distributed random variables with parameters $q_1, q_2$. Then, conditioning on $\Q_{T_1}$ in combination with applying Thm.\  \ref{Spectrally Positive One} twice, yields
\begin{align*}
\E_x e^{-\alpha_1\Q_{T_1}-\alpha_2\Q_{T_1+T_2}}=&\int_0^\infty e^{-\alpha_1y}\E_y e^{-\alpha_2\Q_{T_2}}\Pb_x(\Q_{T_1}\in{\rm d}y)\\
=&\int_0^\infty e^{-\alpha_1y}\left(\frac{q_2}{q_2-\phi(\alpha_2)}\left(e^{-\alpha_2 y}-\frac{\alpha_2}{\psi(q_2)}e^{-\psi(q_2)y}\right)\right)\Pb_x(\Q_{T_1}\in{\rm d}y)\\
=&\hspace{1mm}\frac{q_2}{q_2-\phi(\alpha_2)}\left(\E_x e^{-(\alpha_1+\alpha_2)\Q_{T_1}}-\frac{\alpha_2}{\psi(q_2)}\E_x e^{-(\alpha_1+\psi(q_2))\Q_{T_1}}\right)\\
=&\hspace{1mm}\frac{q_2}{q_2-\phi(\alpha_2)}\bigg(\frac{q_1}{q_1-\phi(\alpha_1+\alpha_2)}\left(e^{-(\alpha_1+\alpha_2)x}-\frac{\alpha_1+\alpha_2}{\psi(q_1)}e^{-\psi
(q_1)x}\right)\\
&-\frac{\alpha_2}{\psi(q_2)}\frac{q_1}{q_1-\phi(\alpha_1+\psi(q_2))}\left(e^{-(\alpha_1+\psi(q_2))x}-\frac{\alpha_1+\psi(q_2)}{\psi(q_1)}e^{-\psi(q_1)x}\right)\bigg). \numberthis \label{case2}
\end{align*}

We see that by conditioning on the value of the workload at the first exponential epoch we can derive the transform at two exponential epochs. The above reasoning rests on the property that the process $\{\Q_t\}_{t\geq0}$ is a Markov process. 

Some special attention is needed for the case $\alpha_1=0$ and $q_1=q_2$, i.e., when $T$ has an Erlang-2 distribution. From the last term in (\ref{case2}) we see that an additional limiting argument is required. A~straightforward application of `l'H\^{o}pital' then yields the expression for $\E_x e^{-\alpha \Q_T}$ as in \cite[Section 4.1]{Man}.

\bigskip
The main idea for the case of $n$ exponentially distributed random variables $T_i$ is very similar: condition on the workload at the first exponential epoch, thus obtaining
\begin{equation}
\label{relation between n and n-1}
\E_xe^{-\alpha_1\Q_{T_1}-\alpha_2\Q_{T_1+T_2}-...-\alpha_n\Q_{T_1+...+T_n}}=\int_0^\infty e^{-\alpha_1 y}\E_ye^{-\alpha_2\Q_{T_2}-...-\alpha_n\Q_{T_2+...+T_n}}\Pb_x(\Q_{T_1}\in{\rm d}y).
\end{equation}
Eqn.~(\ref{relation between n and n-1}) is used in combination with Thm.\ \ref{Spectrally Positive One} to determine the transform at $n$ exponential epochs given the joint transform at $n-1$ epochs. 
At this point, it is useful to understand how the coefficients in the exponential terms of the transform appear; this is illustrated in Fig.~\ref{tree diagram 1} below. Specifically, due to the integration in (\ref{relation between n and n-1}) and Thm.\ \ref{Spectrally Positive One}, it follows that each term produces two new terms when an exponential epoch is added (i.e., when moving from $n-1$ to $n$ exponential epochs), such that the transform at $n$ exponential epochs consists of $2^{n}$ exponential terms. We observe that in the expression for $n$ random variables the first term is, for every $n$, $\exp[{-(\alpha_1+\ldots+\alpha_n)x}]$ (multiplied by some coefficient). The exponents $\exp[{-(\alpha_1+\ldots+\alpha_{l-1}+\psi(q_l))x}]$, where $l=1,\ldots,n$, produce one exponential term of higher order $\exp[{-(\alpha_1+\ldots+\alpha_{l}+\psi(q_{l+1}))x}]$ as well as one term corresponding to  $\exp[{-\psi(q_1)x}]$; it is seen that the latter terms always appear at the `even positions'. 
This mechanism is depicted in the tree diagram in Fig.~\ref{tree diagram 1}, where row $n$ shows the $2^n$ factors when we have $n$ exponentially distributed random variables $T_1,...,T_n$. For ease we only write the exponent at every node, hence the node $\alpha_1+\psi(q_2)$ represents the term corresponding to $\exp[{-(\alpha_1+\psi(q_2))x}]$ (multiplied by some coefficient). 
In every row, the factors are counted from the left.

% Set the overall layout of the tree
\tikzstyle{level 1}=[level distance=1.5cm, sibling distance=8cm]
\tikzstyle{level 2}=[level distance=1.5cm, sibling distance=4cm]
\tikzstyle{level 3}=[level distance=1.5cm, sibling distance=2cm]
\tikzstyle{level 4}=[level distance=1.5cm, sibling distance=2cm]

% Define styles for bags and leafs
\tikzstyle{bag} = [text width=5.5em, text centered]
\tikzstyle{end} = [circle, minimum width=7pt,fill, inner sep=0pt]

\begin{figure}[H]
\begin{tikzpicture}[grow=down, sloped]
\node[bag] {}
    child {
        node[bag] {$\alpha_1$}
            child {
                node[bag] {$\alpha_1+\alpha_2$}
                     child{
                        node[bag] {$\alpha_1+\alpha_2+\alpha_3$}
                           {}
                        edge from parent
                node[above] {$$}
                node[below]  {$$}
                     }
                     child{
                       node[bag] {$\psi(q_1)$}
                     edge from parent
                node[above] {$$}
                node[below]  {$$}
                     }
                edge from parent
                node[above] {$$}
                node[below]  {$$}
            }
            child {
                node[bag] {$\psi(q_1)$}
                    child{
                        node[bag] {$\alpha_1+\psi(q_2)$}
                        edge from parent
                        node[above] {$$}
                        node[below]  {$$}
                     }
                     child{
                       node[bag] {$\psi(q_1)$}
                     edge from parent
                node[above] {$$}
                node[below]  {$$}
                     }
                edge from parent
                node[above] {$$}
                node[below]  {$$}
            }
            edge from parent
            node[above] {}
            node[below]  {$$}
    }
    child {
        node[bag] {$\psi(q_1)$}
            child {
                node[bag] {$\alpha_1+\psi{(q_2)}$}
                     child{
                        node[bag] {$\alpha_1+\alpha_2+\psi(q_3)$}
                          {}
                        edge from parent
                        node[above] {$$}
                        node[below]  {$$}
                     }
                     child{
                       node[bag] {$\psi(q_1)$}
                     edge from parent
                node[above] {$$}
                node[below]  {$$}
                     }
                edge from parent
                node[above] {$$}
                node[below]  {$$}
            }
            child {
                node[bag] {$\psi(q_1)$}
                  child{
                        node[bag] {$\alpha_1+\psi(q_2)$}
                        edge from parent
                node[above] {$$}
                node[below]  {$$}
                     }
                  child{
                       node[bag] {$\psi(q_1)$}
                     edge from parent
                node[above] {$$}
                node[below]  {$$}
                     }
                edge from parent
                node[above] {$$}
                node[below]  {$$}
            }
            edge from parent
            node[above] {}
            node[below]  {$$}
    };

\end{tikzpicture}
\caption{The exponents in (\ref{Joint Transform}) at every step}
\label{tree diagram 1}
\end{figure}
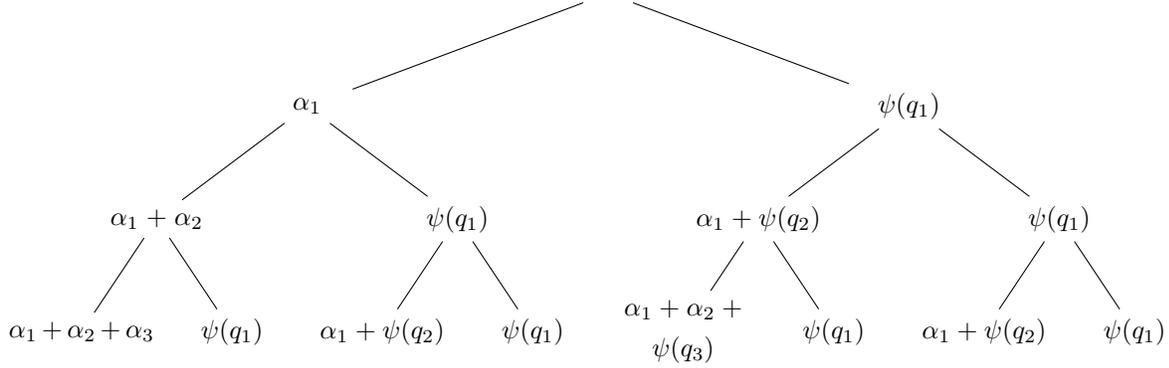

We observe that the entire tree consists of subtrees starting from a node $\psi(q_1)$ (apart from the first element of every row). Suppose we have the element $\exp[{-(\alpha_1+\ldots+\alpha_{l-1}+\psi(q_l))x}]$ in the $n$-th row. This originates from a subtree generated by an initial node $\psi(q_1)$ that is $l-1$ rows higher in the tree. This follows from the fact that if we start from the node $\psi(q_1)$ we have to move $l-1$ times down and left in order to reach the node $\alpha_1+\ldots+\alpha_{l-1}+\psi(q_l)$.    
So the node $\alpha_1+\ldots+\alpha_{l-1}+\psi(q_l)$ in the $n$-th row belongs to a subtree spanned from the node $\psi(q_1)$ in the $(n-l+1)$-th row. For the ordering of terms, we assume that this initial node is at position $2j$ for some $j=1,2,\ldots,2^{n-l}$; we recall here that the nodes $\psi(q_1)$ are located at the even positions of each row. Since the node is at position $2j$ there are $2j-1$ nodes in front of it. At every step downwards in the tree, the number of terms doubles since every term will give two new terms after using Thm.\ \ref{Spectrally Positive One}. Since we go down $l-1$ rows, those $2j-1$ nodes will produce in total $(2j-1)2^{l-1}=2^lj-2^{l-1}$ nodes. Hence, we see that the element $\exp[{-(\alpha_1+\ldots+\alpha_{l-1}+\psi(q_l))x}]$ in the $n$-th row is at the position $2^lj - 2^{l-1}+1$. The numbering of the coefficients is based on this ordering.

% Set the overall layout of the tree
\tikzstyle{level 1}=[level distance=1.5cm, sibling distance=8cm]
\tikzstyle{level 2}=[level distance=1.5cm, sibling distance=4cm]
\tikzstyle{level 3}=[level distance=1.5cm, sibling distance=2cm]

% Define styles for bags and leafs
\tikzstyle{bag} = [text width=5.5em, text centered]
\tikzstyle{end} = [circle, minimum width=7pt,fill, inner sep=0pt]

\begin{figure}[H]
\begin{tikzpicture}[grow=down, sloped]
\node[bag] {}
    child {
        node[bag] {$L_1^{(1)}$}
            child {
                node[bag] {$L_1^{(2)}$}
                     child{
                        node[bag] {$L_1^{(3)}$}
                        edge from parent
                        node[above] {$$}
                        node[below]  {$$}
                     }
                     child{
                       node[bag] {$L_{(2,1)}^{(3)}$}
                        edge from parent
                        node[above] {$$}
                        node[below]  {$$}
                     }
                    edge from parent
                    node[above] {$$}
                    node[below]  {$$}
            }
            child {
                node[bag] {$L_{(2,1)}^{(2)}$}
                    child{
                        node[bag] {$L_{(3,2)}^{(3)}$}
                        edge from parent
                        node[above] {$$}
                        node[below]  {$$}
                     }
                     child{
                       node[bag] {$L_{(4,1)}^{(3)}$}
                         edge from parent
                         node[above] {$$}
                         node[below]  {$$}
                     }
                edge from parent
                node[above] {$$}
                node[below]  {$$}
            }
            edge from parent
            node[above] {}
            node[below]  {$$}
    }
    child {
        node[bag] {$L_{(2,1)}^{(1)}$}
            child {
                node[bag] {$L_{(3,2)}^{(2)}$}
                     child{
                        node[bag] {$L_{(5,3)}^{(3)}$}
                        edge from parent
                        node[above] {$$}
                        node[below]  {$$}
                     }
                     child{
                       node[bag] {$L_{(6,1)}^{(3)}$}
                        edge from parent
                        node[above] {$$}
                        node[below]  {$$}
                     }
                edge from parent
                node[above] {$$}
                node[below]  {$$}
            }
            child {
                node[bag] {$L_{(4,1)}^{(2)}$}
                  child{
                        node[bag] {$L_{(7,2)}^{(3)}$}
                        edge from parent
                node[above] {$$}
                node[below]  {$$}
                     }
                  child{
                       node[bag] {$L_{(8,1)}^{(3)}$}
                     edge from parent
                node[above] {$$}
                node[below]  {$$}
                     }
                edge from parent
                node[above] {$$}
                node[below]  {$$}
            }
            edge from parent
            node[above] {}
            node[below]  {$$}
    };

\end{tikzpicture}
\caption{The coefficients of the exponential terms}
\label{tree graph 2}
\end{figure}
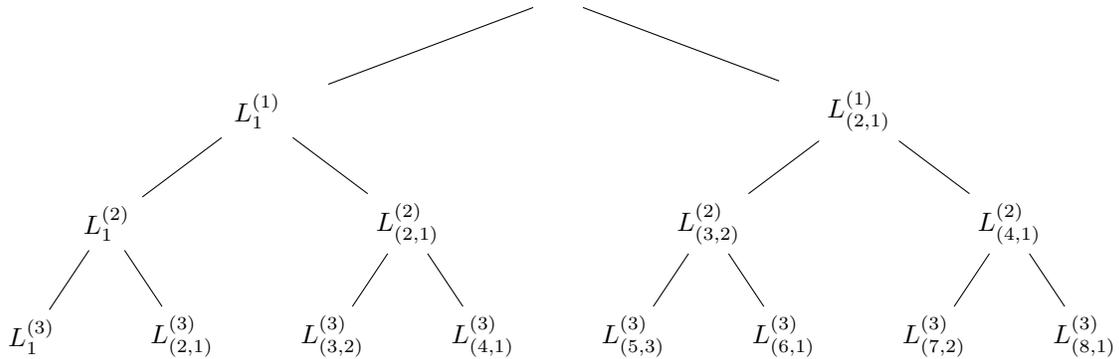

In Fig.\ \ref{tree graph 2}, we use the following notation:
\begin{itemize}
\item[$\circ$]
$L_1^{(n)}$ denotes the coefficient of the term $\exp[{-(\alpha_1+\ldots+\alpha_n)x}]$;
\item[$\circ$]
$L_{(2j,1)}^{(n)}$ denote the coefficients of $\exp[{-\psi(q_1)x}]$ (where $j=1,2,\ldots,2^{n-1}$);
\item[$\circ$]
$L_{(2^lj-2^{l-1}+1,l)}^{(n)}$  denote the coefficients of  $\exp[{-(\alpha_1+\ldots+\alpha_{l-1}+\psi(q_l))x}]$ (where $l=2,3,\ldots,n$ and $j=1,2,\ldots,2^{n-l}$). 
\end{itemize}We note here that the superscript $(n)$ in these factors corresponds to the number of exponential random variables considered (or, equivalently, in which row of the tree we are).   We now proceed to the main result for the case of a spectrally positive input process.

\begin{theorem}
\label{MainTheorem}
Suppose we have $n$ independent exponentially distributed random variables $T_1,\ldots,T_n$ with distinct parameters $q_1,...,q_n$. Then, for $\alpha_i\geq0$ and $x\geq0$, we have 
\[\E_xe^{-\alpha_1\Q_{T_1}-\alpha_2\Q_{T_1+T_2}-\ldots-\alpha_n\Q_{T_1+T_2+\ldots+T_n}}=\prod_{i=1}^n\frac{q_i}{q_i-\phi(\alpha_i+\ldots+\alpha_n)}e^{-(\alpha_1+\ldots+\alpha_n)x}\]
\begin{equation}
\label{superposition}
\hspace{4cm}+\,\sum_{l=1}^n\sum_{j=1}^{2^{n-l}}L_{(2^{l}j-2^{l-1}+1,l)}^{(n)}(\bar{q},\bar{\alpha})e^{-(\alpha_1+...+\alpha_{l-1}+\psi(q_l))x},
\end{equation}
where the coefficients $L_{(2^{l}j-2^{l-1}+1,l)}^{(n)}$ are defined below in Definition \ref{DefinitionL}.
\end{theorem}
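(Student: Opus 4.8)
The plan is to argue by induction on the number of exponential epochs $n$, using the conditioning identity \eqref{relation between n and n-1} as the recursive engine and Thm.~\ref{Spectrally Positive One} as the one-step building block. The base case $n=1$ is exactly Thm.~\ref{Spectrally Positive One}: the product in \eqref{superposition} collapses to $\frac{q_1}{q_1-\phi(\alpha_1)}e^{-\alpha_1 x}$, and the lone remaining term $L_{(2,1)}^{(1)}e^{-\psi(q_1)x}$ reproduces the coefficient $-\frac{q_1}{q_1-\phi(\alpha_1)}\frac{\alpha_1}{\psi(q_1)}$; Definition~\ref{DefinitionL} is set up so that this holds by construction.

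For the inductive step I would assume the theorem for the $n-1$ epochs generated by $T_2,\ldots,T_n$, with parameters $q_2,\ldots,q_n$ and exponents $\alpha_2,\ldots,\alpha_n$. The induction hypothesis then writes $\E_y e^{-\alpha_2\Q_{T_2}-\ldots-\alpha_n\Q_{T_2+\ldots+T_n}}$ as a finite sum $\sum_k C_k\,e^{-\gamma_k y}$, in which each exponent $\gamma_k$ equals either $\alpha_2+\ldots+\alpha_n$ or some $\alpha_2+\ldots+\alpha_{m-1}+\psi(q_m)$. Substituting this into \eqref{relation between n and n-1}, interchanging the (finite) sum with the integral, and noting that $\int_0^\infty e^{-(\alpha_1+\gamma_k)y}\Pb_x(\Q_{T_1}\in{\rm d}y)=\E_x e^{-(\alpha_1+\gamma_k)\Q_{T_1}}$, I would apply Thm.~\ref{Spectrally Positive One} once per summand. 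Each parent term $C_k e^{-\gamma_k y}$ thereby splits into exactly the two offspring $C_k\frac{q_1}{q_1-\phi(\alpha_1+\gamma_k)}e^{-(\alpha_1+\gamma_k)x}$ and $-C_k\frac{q_1}{q_1-\phi(\alpha_1+\gamma_k)}\frac{\alpha_1+\gamma_k}{\psi(q_1)}e^{-\psi(q_1)x}$, which is precisely the two-children mechanism underlying Fig.~\ref{tree diagram 1}.

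It then remains to verify three bookkeeping statements. For the \emph{exponents}, prepending $\alpha_1$ maps $\alpha_2+\ldots+\alpha_n\mapsto\alpha_1+\ldots+\alpha_n$ and $\alpha_2+\ldots+\alpha_{m-1}+\psi(q_m)\mapsto\alpha_1+\ldots+\alpha_{m-1}+\psi(q_m)$, while every second offspring carries the exponent $\psi(q_1)$; hence the exponents appearing in row $n$ are exactly those in \eqref{superposition}, and the leading term accrues the factor $\frac{q_1}{q_1-\phi(\alpha_1+\ldots+\alpha_n)}$, building up the product $\prod_{i=1}^n\frac{q_i}{q_i-\phi(\alpha_i+\ldots+\alpha_n)}$. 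For the \emph{positions}, I would use that the parent at position $p$ in row $n-1$ sends its left (non-$\psi(q_1)$) offspring to position $2p-1$ and its right ($\psi(q_1)$) offspring to position $2p$ in row $n$; feeding the inductive position $2^{l-1}j-2^{l-2}+1$ of the term $\alpha_2+\ldots+\alpha_{l-1}+\psi(q_l)$ (the level $l-1$ of the shifted problem) into $p\mapsto 2p-1$ yields $2^{l}j-2^{l-1}+1$, exactly the subscript in \eqref{superposition}, while the $\psi(q_1)$ offspring fill the even positions $2j$ as required for $l=1$. For the \emph{coefficients}, the two displayed offspring formulas furnish a recursion $L^{(n)}_{\text{left}}=\frac{q_1}{q_1-\phi(\alpha_1+\gamma)}L^{(n-1)}_{\text{parent}}$ and $L^{(n)}_{\text{right}}=-\frac{\alpha_1+\gamma}{\psi(q_1)}L^{(n)}_{\text{left}}$, and I would check that this agrees with Definition~\ref{DefinitionL}.

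The main obstacle is exactly this combinatorial synchronisation: because the inductive sub-problem is the one driven by $T_2,\ldots,T_n$, the relabelling $(\alpha_i,q_i)\mapsto(\alpha_{i+1},q_{i+1})$ shifts every index by one in passing from row $n-1$ to row $n$, and one must keep the position labels $2^{l}j-2^{l-1}+1$, the shifted exponents, and the coefficient recursion consistent throughout. A secondary, purely technical point is the removable singularities arising when some $q_i$ coincide or some $\alpha_i$ vanish, so that $q_1-\phi(\cdot)$ or $\psi(q_1)$ would create a $0/0$; under the standing assumption that the $q_i$ are distinct these are resolved by the l'H\^{o}pital argument already indicated after \eqref{case2} and leave the generic identity untouched.
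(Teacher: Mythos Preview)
Your proposal is correct and follows essentially the same route as the paper: induction on $n$, using the conditioning identity \eqref{relation between n and n-1} together with Thm.~\ref{Spectrally Positive One} to split each parent term into two offspring, then verifying that the exponents, tree positions, and coefficients match \eqref{superposition} and Definition~\ref{DefinitionL}. The paper carries out exactly the combinatorial bookkeeping you flag as the main obstacle---the index shift $(\alpha_i,q_i)\mapsto(\alpha_{i+1},q_{i+1})$ and the resulting relabelling of the $d^{(i,\cdot)}$---via the ceiling-function identities in Remark~\ref{spirit lance remark} and a case-by-case treatment of the four offspring types $\mathcal{I}_1,\mathcal{I}_2,\mathcal{II}_1,\mathcal{II}_2$.
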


The vectors $\bar{q}=(q_1,\ldots,q_n)$ and $\bar{\alpha}=(\alpha_1,\ldots,\alpha_n)$ are here explicitly included, so as to show the dependence of the coefficients on the $q$'s and $\alpha$'s. Later on these vectors are omitted to keep the notation concise.

\begin{definition}
\label{DefinitionL}
For $l=1,...,n$ and $j=1,\ldots,2^{n-l}$ we have
\[L_{(2^lj-2^{l-1}+1,l)}^{(n)}(\bar{q},\bar{\alpha})=c^{(2^lj-2^{l-1}+1,n)}\prod_{i=1}^n\frac{q_i}{q_i-\phi(\alpha_i+d^{(i,2^lj-2^{l-1}+1)})}\prod_{i=l}^n\frac{\alpha_i+d^{(i,2^lj-2^{l-1}+1)}}{d^{(i-1,2^lj-2^{l-1}+1)}}.\]
where the $c^{(2^lj-2^{l-1}+1,n)}$ are given below in Lemma \ref{lemmasign}, $d^{(n,2^lj-2^{l-1}+1)}=0$ and the $d^{(i,2^lj-2^{l-1}+1)}$, for $i=1,2,\ldots,n-1$, are given through
\[d^{(i,2^lj-2^{l-1}+1)}=\left \{ \begin{array}{ll}
\alpha_{i+1}+d^{(i+1,2^lj-2^{l-1}+1)} &\mbox{for } \ceil*{\frac{2^lj-2^{l-1}+1}{2^i}} \text{ odd,}\\
\\
\psi(q_{i+1}) &\mbox{for } \ceil*{\frac{2^lj-2^{l-1}+1}{2^i}} \text{ even.}
\end{array} \right.\]
\end{definition}

\begin{remark} {\em 
The terms $d^{(i,j)}$ are given from a recursive formula. The fact that this recursion is well defined, follows because the last term equals zero (i.e., $d^{(n,j)} = 0$ for all $j$'s).}
\end{remark}
%The terms $c^{(2^lj-2^{l-1}+1,n)}$ are given by the following lemma.

\begin{lemma}
\label{lemmasign}
Consider $j=1,2,...,2^n$ and take the binary representation of $j-1$, i.e., $j-1=b_0 2^0+b_12^1+\ldots+b_{n-1}2^{n-1}$. Then, for $c^{(j,n)}$ (or, equivalently, the sign of the $j$-th element in the $n$-th row of the tree presented above) we have 
\[c^{(j,n)}=(-1)^{\PAR\{b_0,\ldots,b_{n-1}\}},\]
where $\PAR\{b_0,\ldots,b_{n-1}\}$ is $0$ if the number of 1's in the binary expansion of $j-1$ is even and 1 if it is odd.
\end{lemma}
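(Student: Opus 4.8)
The plan is to prove the closed form for $c^{(j,n)}$ by induction on $n$, after first extracting from Theorem~\ref{Spectrally Positive One} the precise rule by which signs propagate down the tree. The key observation is that the \emph{only} sign ever introduced in the recursion comes from the single minus sign in the identity
\[\frac{q}{q-\phi(\gamma)}\Bigl(e^{-\gamma x}-\frac{\gamma}{\psi(q)}\,e^{-\psi(q)x}\Bigr),\]
which is exactly what Theorem~\ref{Spectrally Positive One} produces when a term $e^{-\gamma y}$ from row $n-1$ is integrated against $\Pb_x(\Q_{T_1}\in{\rm d}y)$ as in~(\ref{relation between n and n-1}). Hence each node in row $n-1$, say at position $k$, spawns exactly two nodes in row $n$: its left offspring, at position $2k-1$, carries the $e^{-\gamma x}$ term and \emph{inherits} the parent's sign, whereas its right offspring, at position $2k$, carries the $e^{-\psi(q)x}$ term and therefore picks up the \emph{opposite} sign. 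Recording only the $\pm1$ bookkeeping factor, this yields the recursion
\[c^{(2k-1,n)}=c^{(k,n-1)},\qquad c^{(2k,n)}=-\,c^{(k,n-1)},\]
which I would read off and cross-check against the four signs $+,-,-,+$ already visible in~(\ref{case2}).

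With this recursion in hand I would run the induction on $n$. The base case $n=1$ is immediate: the two nodes have $j-1\in\{0,1\}$, with zero and one $1$'s in their binary expansions, matching the signs $+1$ and $-1$. For the step I fix a position $j$ in row $n$ and write its parent as $k=\lceil j/2\rceil$, so that either $j=2k-1$ or $j=2k$. The argument then reduces to two elementary facts about binary expansions: doubling, $m\mapsto 2m$, leaves the digit sum unchanged, while $m\mapsto 2m+1$ raises it by exactly one. Concretely, if $j=2k-1$ then $j-1=2(k-1)$ has the same number of $1$'s as $k-1$, so $c^{(2k-1,n)}=c^{(k,n-1)}$ agrees with equal parities; and if $j=2k$ then $j-1=2(k-1)+1$ has exactly one more $1$ than $k-1$, so the extra minus sign in the recursion is precisely matched by the flipped parity. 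In both cases $c^{(j,n)}=(-1)^{\PAR\{b_0,\ldots,b_{n-1}\}}$, closing the induction.

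The points that require care, rather than genuine difficulty, are twofold: (i) correctly matching the two offspring of a node to the two exponential terms of Theorem~\ref{Spectrally Positive One}, so that it is unambiguous which child keeps the sign (the $e^{-\gamma x}$ term, placed at the odd position $2k-1$) and which flips it (the $e^{-\psi(q)x}$ term, the sole source of a minus sign, placed at the even position $2k$); and (ii) the index bookkeeping translating the tree relations ``left child $=2k-1$, right child $=2k$'' into the shift operations $m\mapsto 2m$ and $m\mapsto 2m+1$ on the $0$-indexed labels $m=j-1$. Once these are pinned down the computation is routine; in effect the lemma identifies the row-wise sign pattern with the Thue--Morse sequence generated by the digit-sum parity of $j-1$.
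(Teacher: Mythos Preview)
Your proof is correct. Both you and the paper argue by induction on $n$, but you take a different decomposition in the inductive step. You work directly with the parent--child relation in the tree, i.e., the recursion $c^{(2k-1,n)}=c^{(k,n-1)}$ and $c^{(2k,n)}=-c^{(k,n-1)}$, which amounts to tracking the least significant bit $b_0$ of $j-1$; the binary fact that $2m$ and $m$ share digit sum while $2m+1$ has one more then finishes the step. The paper instead splits row $k+1$ according to the \emph{most} significant bit: it invokes the self-similarity observation that row $k$ coincides with the first $2^k$ entries of row $k+1$ (so the formula carries over for $j\le 2^k$), and then handles $j'=j+2^k$ via the left--right symmetry $c^{(j',k+1)}=-c^{(j,k+1)}$ of Remark~\ref{Samurai X Remark}, matching the extra $1$ in the top bit of $j'-1$. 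Your route is the more economical one, since it needs neither the self-similarity statement nor the symmetry remark as auxiliary inputs; the paper's route, by contrast, makes those two structural properties of the sign pattern (which is the Thue--Morse sequence, as you note) explicit, and they are reused elsewhere in the paper.
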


Similar to the Erlang-2 situation (i.e., $n=2$ and $q_1=q_2$), the case in which some of the $q_i$'s are the same has to be treated separately. For instance, $n$ successive applications of `l'H\^{o}pital' lead to an expression for $\E_x e^{-\alpha \Q_T}$ , when $T$ has an Erlang-$n$ distribution.

\subsection{Spectrally negative case}\label{section spectrally negative}

In this subsection we concentrate on the case of a spectrally negative input process $X$. The joint workload density has a structure that is very similar to that observed for the LST in the spectrally positive case. Due to the strong Markov property the joint density can be decomposed into
\begin{equation*}
\Pb_x(\Q_{T_1} \in {\rm d}y_1 ; \; \cdots \; ; \Q_{T_1+...+T_n}\in{\rm d}y_n) 
 = \Pb_x(\Q_{T_1} \in {\rm d}y_1) \; \cdots \; \Pb_{y_{n-1}}(\Q_{T_n} \in {\rm d}y_n). 
\end{equation*}
That is, the joint density is simply the product of densities at single exponential epochs, as given in Thm.~\ref{spectrally negative one exponential time}. Henceforth, we focus on the density of the workload process at consecutive exponential epochs, i.e., 
\begin{equation}
\label{density at n epochs}
\Pb_x(\Q_{T_1+...+T_n}\in{\rm d}y), \qquad y\geq0.
\end{equation}

First we illustrate how to obtain an expression for the density $\Pb_x(\Q_{T_1+T_2}\in{\rm d}y)$ for some initial workload $x\geq0$ and $y>0$. From Thm.\ \ref{spectrally negative one exponential time} we have an expression for the density $\Pb_x(\Q_T\in{\rm d}y)$. Consider now two exponentially distributed random variables $T_1,T_2$ with distinct parameters $q_1,q_2$. 
Conditioning on $\Q_{T_1}$ and applying Thm.\ \ref{spectrally negative one exponential time} twice yields
\begin{align} \label{eqn:pos-2}
\Pb_x(\Q_{T_1+T_2}\in{\rm d}y)=&\int_{z=0}^\infty \Pb_z(\Q_{T_2}\in{\rm d}y)\Pb_x(\Q_{T_1}\in{\rm d}z) \nonumber\\
=&\int_0^\infty \left( -q_1W^{(q_1)}(x-z)+\Psi(q_1)e^{-\Psi(q_1)z}Z^{(q_1)}(x)\right)\Pb_z(\Q_{T_2}\in{\rm d}y){\rm d}z \nonumber\\
=&\Bigg[q_1q_2\int_0^\infty W^{(q_1)}(x-z)W^{(q_2)}(z-y){\rm d}z \nonumber\\
&-q_1\Psi(q_2)e^{-\Psi(q_2)y}\int_0^\infty W^{(q_1)}(x-z)Z^{(q_2)}(z){\rm d}z \nonumber\\
&-q_2\Psi(q_1)Z^{(q_1)}(x)\int_0^\infty e^{-\Psi(q_1)z}W^{(q_2)}(z-y){\rm d}z \nonumber\\
&+ \Psi(q_1)\Psi(q_2)e^{-\Psi(q_2)y}Z^{(q_1)}(x)\int_0^\infty e^{-\Psi(q_1)z}Z^{(q_2)}(z){\rm d}z\Bigg]{\rm d}y.
\end{align}
After some standard calculus and using the definition of the $q$-scale functions we find the expression 
\begin{align*}
\Pb_x(Q_{T_1+T_2}\in\mathrm{d}y)=&\Bigg[q_1q_2\left( W^{(q_2)}\star W^{(q_1)}\right)(x-y)-\Psi(q_2)q_1e^{-\Psi(q_2)y}\left(Z^{(q_2)}\star W^{(q_1)}\right)(x)\\
&-\Psi(q_1)e^{-\Psi(q_1)y}\frac{q_2}{q_1-q_2}Z^{(q_1)}(x)+\Psi(q_2)e^{-\Psi(q_2)y}\frac{q_1}{q_1-q_2}Z^{(q_1)}(x)\Bigg]{\rm d}y. \numberthis \label{expression for density spectrally negative two times final}
\end{align*}

Again the case $q_1=q_2$ has to treated separately, by using l'H\^{o}pital's rule; the detailed computations corresponding to this case can be found in \cite[Section 4.2]{Man}. 

We see that by conditioning on the value of the workload at the first exponential epoch we can derive
the transform at two exponential epochs. As a next step our aim is to find an expression for (\ref{density at n epochs}) for an arbitrary $n>0$ and for exponentially distributed random variables $T_i$ with parameter $q_i$ ($i=1,\ldots,n$). Conditioning on the workload at the first $n-1$ exponential epochs yields
\begin{equation}
\label{transition spectrally negative}
\Pb_x(\Q_{T_1+\ldots+T_n}\in{\rm d}y)=\int_{z=0}^\infty \Pb_x(\Q_{T_1+\ldots+T_{n-1}}\in{\rm d}z)\Pb_z(Q_{T_n}\in{\rm d}y).
\end{equation}
For the case of a spectrally positive input process (which was the topic of the previous subsection) one should  condition on the value at the first exponential epoch, which allows the use of the induction hypothesis, but one needs to adjust the indices appropriately as the first exponential random variable is actually $T_2$. 
For the spectrally negative case, however, conditioning on the value of $T_1+...+T_{n-1}$ (and not only on $T_1$) allows us to circumvent this technicality. 

Moreover, the transition from step $n-1$ to $n$ can again be represented by using an elegant tree structure that is similar to the one developed for the spectrally positive case.
The expression for the density at $n-1$ exponential epochs has $2^{n-1}$ terms and each term produces two new terms when integrated with the density $\Pb_z(Q_{T_n}\in{\rm d}y)$ (with respect to $z$). We also notice that in the expression for $n$ exponentially distributed random variables the first term is always of the form $\left(W^{(q_n)}\star\ldots\star W^{(q_1)}\right)(x-y)$ while the other terms are of the form $\left(Z^{(q_l)}\star W^{(q_{l-1})}\star\ldots\star W^{(q_1)}\right)(x)$, for $l=1,2,\ldots,n$, multiplied by some coefficients that in general are functions of $y$. The underlying mechanism is illustrated in Fig.~\ref{tree spectrally negative}. In this tree the node $W^{(q_n)}(x-y)$ denotes the term $\left(W^{(q_n)}\star\ldots\star W^{(q_1)}\right)(x-y)$ while the nodes $Z^{(q_l)}(x)$, for $l=1,\ldots,n$, denote the terms $\left(Z^{(q_l)}\star W^{(q_{l-1})}\star\ldots\star W^{(q_1)}\right)(x)$. We see that at every row, say row $k$ for ease, a new subtree with root $\left( Z^{(q_k)}\star W^{(q_{k-1})}\star\ldots\star W^{(q_1)}\right)(x)$ is created. These terms do not change as we move downwards in the tree since they only depend on the initial workload $x$ and do not take part in the integrations, similar to those carried out in (\ref{eqn:pos-2}). Their coefficients change though, by a mechanism that is identified in the proof of our result.

% Set the overall layout of the tree
\tikzstyle{level 1}=[level distance=1.5cm, sibling distance=8cm]
\tikzstyle{level 2}=[level distance=1.5cm, sibling distance=4cm]
\tikzstyle{level 3}=[level distance=1.5cm, sibling distance=2cm]
\tikzstyle{level 4}=[level distance=1.5cm, sibling distance=2cm]

% Define styles for bags and leafs
\tikzstyle{bag} = [text width=5.5em, text centered]
\tikzstyle{end} = [circle, minimum width=5pt, maximum width 7pt, fill, inner sep=2pt]

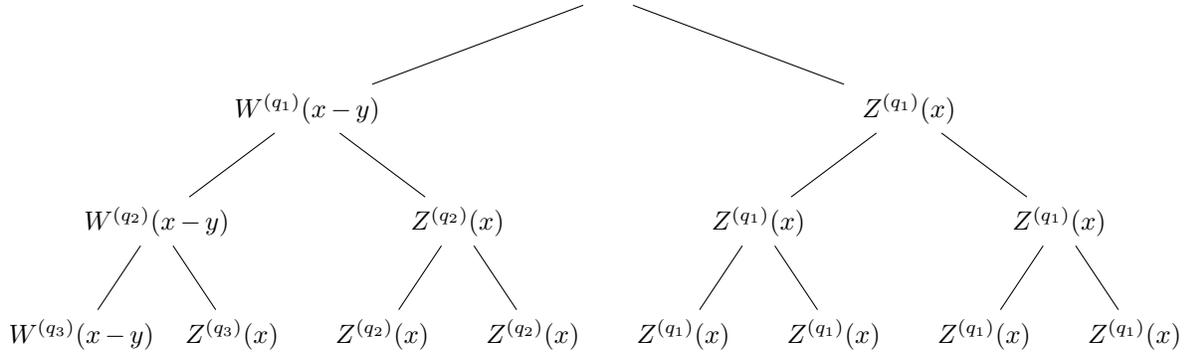
\begin{figure}[H]
\begin{tikzpicture}[grow=down, sloped]
\node[bag] {}
    child {
        node[bag] {$W^{(q_1)}(x-y)$}
            child {
                node[bag] {$W^{(q_2)}(x-y)$}
                     child{
                        node[bag] {$W^{(q_3)}(x-y)$}
                           {}
                        edge from parent
                node[above] {$$}
                node[below]  {$$}
                     }
                     child{
                       node[bag] {$ Z^{(q_3)}(x)$}
                     edge from parent
                node[above] {$$}
                node[below]  {$$}
                     }
                edge from parent
                node[above] {$$}
                node[below]  {$$}
            }
            child {
                node[bag] {$Z^{(q_2)}(x)$}
                    child{
                        node[bag] {$Z^{(q_2)}(x)$}
                        edge from parent
                        node[above] {$$}
                        node[below]  {$$}
                     }
                     child{
                       node[bag] {$Z^{(q_2)}(x)$}
                     edge from parent
                node[above] {$$}
                node[below]  {$$}
                     }
                edge from parent
                node[above] {$$}
                node[below]  {$$}
            }
            edge from parent
            node[above] {}
            node[below]  {$$}
    }
    child {
        node[bag] {$Z^{(q_1)}(x)$}
            child {
                node[bag] {$Z^{(q_1)}(x)$}
                     child{
                        node[bag] {$Z^{(q_1)}(x)$}
                          {}
                        edge from parent
                        node[above] {$$}
                        node[below]  {$$}
                     }
                     child{
                       node[bag] {$Z^{(q_1)}(x)$}
                     edge from parent
                node[above] {$$}
                node[below]  {$$}
                     }
                edge from parent
                node[above] {$$}
                node[below]  {$$}
            }
            child {
                node[bag] {$Z^{(q_1)}(x)$}
                  child{
                        node[bag] {$Z^{(q_1)}(x)$}
                        edge from parent
                node[above] {$$}
                node[below]  {$$}
                     }
                  child{
                       node[bag] {$Z^{(q_1)}(x)$}
                     edge from parent
                node[above] {$$}
                node[below]  {$$}
                     }
                edge from parent
                node[above] {$$}
                node[below]  {$$}
            }
            edge from parent
            node[above] {}
            node[below]  {$$}
    };

\end{tikzpicture}
\caption{The convolution terms at every step}
\label{tree spectrally negative}
\end{figure}

\noindent
We now proceed with the main result for the spectrally negative case.
\begin{theorem}
\label{spectrally negative general case density}
Suppose we have $n$ independent exponentially distributed random variables $T_1,\ldots,T_n$ with distinct parameters $q_1,\ldots,q_n$. The density of $\Q_{T_1+...+T_n}$, given that $\Q_0=x$, is given by

\begin{align*}
\Pb_x(\Q_{T_1+\ldots+T_n}\in\mathrm{d}y) =& \Bigg[ (-1)^n \prod_{i=1}^nq_i \cdot \left(W^{(q_n)}\star\ldots\star W^{(q_1)}\right)(x-y)\\
& +\sum_{l=1}^n\sum_{j=1}^{2^{n-l}}L_{(2^lj-2^{l-1}+1,l)}^{(n)}(y)\left( Z^{(q_l)}\star W^{(q_{l-1})}\star\ldots\star W^{(q_1)}\right)(x)\Bigg]\mathrm{d}y,
\end{align*}
where the coefficients $L_{(2^lj-2^{l-1}+1,l)}^{(n)}(y)$ %for $l=1,...,n$ and $j=1,...,2^{n-l}$
 are given in Definition \ref{lambdaexpo}.
\end{theorem}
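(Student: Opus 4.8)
The plan is to argue by induction on $n$, using the Markov conditioning identity \eqref{transition spectrally negative} to pass from $n-1$ to $n$ exponential epochs. The base case $n=1$ is exactly Theorem~\ref{spectrally negative one exponential time}: there the leading ``$W$-term'' is $-q_1W^{(q_1)}(x-y)$, matching $(-1)^1\prod_{i=1}^1 q_i$, and the single ``$Z$-term'' ($l=1$, $j=1$) is $\Psi(q_1)e^{-\Psi(q_1)y}Z^{(q_1)}(x)$, so that $L_{(1,1)}^{(1)}(y)=\Psi(q_1)e^{-\Psi(q_1)y}$. The already-computed identity \eqref{expression for density spectrally negative two times final} supplies the case $n=2$ and serves as a consistency check on the coefficient recursion of Definition~\ref{lambdaexpo}.

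For the inductive step I would substitute the hypothesized expression for $\Pb_x(\Q_{T_1+\ldots+T_{n-1}}\in{\rm d}z)$ together with the single-epoch density $\Pb_z(\Q_{T_n}\in{\rm d}y)=\bigl(\Psi(q_n)e^{-\Psi(q_n)y}Z^{(q_n)}(z)-q_nW^{(q_n)}(z-y)\bigr){\rm d}y$ into \eqref{transition spectrally negative} and integrate termwise in $z$. The structural point, visible in Fig.~\ref{tree spectrally negative}, is that every $Z$-term $\bigl(Z^{(q_l)}\star W^{(q_{l-1})}\star\ldots\star W^{(q_1)}\bigr)(x)$ depends only on $x$ and hence factors out of the $z$-integral; integrating its $z$-dependent coefficient against the two pieces of the single-epoch density therefore produces two offspring carrying the \emph{same} convolution in $x$ but new $y$-dependent coefficients. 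Only the leading $W$-term genuinely interacts with $z$: pairing it with $-q_nW^{(q_n)}(z-y)$ and using associativity of $\star$ together with the vanishing of $W^{(q)}$ on the negative half-line gives $\int_0^\infty \bigl(W^{(q_{n-1})}\star\ldots\star W^{(q_1)}\bigr)(x-z)\,W^{(q_n)}(z-y){\rm d}z=\bigl(W^{(q_n)}\star\ldots\star W^{(q_1)}\bigr)(x-y)$, reproducing the new leading term with prefactor $(-1)^n\prod_{i=1}^n q_i$; pairing it instead with $\Psi(q_n)e^{-\Psi(q_n)y}Z^{(q_n)}(z)$ creates the fresh level-$l=n$ term $\bigl(Z^{(q_n)}\star W^{(q_{n-1})}\star\ldots\star W^{(q_1)}\bigr)(x)$, i.e.\ the root of the new subtree at position $2^{n-1}+1$.

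The computational core is the evaluation of the two elementary transforms generating the offspring coefficients. Since each $L^{(n-1)}(z)$ is a linear combination of exponentials $e^{-\Psi(q_k)z}$, everything reduces to $\int_0^\infty e^{-\Psi(q_k)z}W^{(q_n)}(z-y){\rm d}z$ and $\int_0^\infty e^{-\Psi(q_k)z}Z^{(q_n)}(z){\rm d}z$. Shifting by $y$ and invoking \eqref{W function} with $\Phi(\Psi(q_k))=q_k$ gives $\int_0^\infty e^{-\Psi(q_k)z}W^{(q_n)}(z-y){\rm d}z = e^{-\Psi(q_k)y}/(q_k-q_n)$, while \eqref{Z scale function} yields $\int_0^\infty e^{-\beta z}Z^{(q_n)}(z){\rm d}z=\Phi(\beta)/\bigl(\beta(\Phi(\beta)-q_n)\bigr)$, so that at $\beta=\Psi(q_k)$ the second integral equals $q_k/\bigl(\Psi(q_k)(q_k-q_n)\bigr)$. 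Feeding these back shows that the branching map multiplies each coefficient by explicit rational factors in the $q_i$ and shifts the exponent of $y$; it then remains to check that this map coincides with the recursion defining $L_{(2^lj-2^{l-1}+1,l)}^{(n)}(y)$ in Definition~\ref{lambdaexpo}.

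I expect two points to demand the most care. The first is purely combinatorial: matching the branching positions $2^lj-2^{l-1}+1$ and the induced signs/indices to those prescribed by Definition~\ref{lambdaexpo}, the analogue on the spectrally negative side of the parity bookkeeping of Lemma~\ref{lemmasign}. The second is analytic: the Laplace identity \eqref{W function} holds only for $\beta>\Psi(q_n)$, and since $W^{(q_n)}(z)$ grows like $e^{\Psi(q_n)z}$, the integral $\int_0^\infty e^{-\Psi(q_k)z}W^{(q_n)}(z-y){\rm d}z$ converges on the nose only when $q_k>q_n$. I would therefore first carry out the computation in the parameter region where all relevant integrals converge and then extend the resulting identity, which is rational in $(q_1,\ldots,q_n)$, to all distinct $q_i$ by analytic continuation, in the spirit of the continuation argument following Theorem~\ref{spectrally negative one exponential time}; the coinciding-$q_i$ cases are recovered afterwards by the l'H\^{o}pital limit already illustrated for $n=2$.
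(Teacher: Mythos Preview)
Your proposal is correct and follows essentially the same route as the paper: induction on $n$ via the conditioning identity \eqref{transition spectrally negative}, splitting into the four integrals the paper calls ${\mathcal J}_1$--${\mathcal J}_4$, and reducing to the same two elementary Laplace integrals of $e^{-\Psi(q_k)z}$ against $W^{(q_n)}(z-y)$ and $Z^{(q_n)}(z)$, followed by the combinatorial bookkeeping against Definition~\ref{lambdaexpo} and Lemma~\ref{lemma sign 2}. Your observation that \eqref{W function} requires $\Psi(q_k)>\Psi(q_n)$ and that an analytic-continuation step is needed is a point the paper glosses over; note also the small slip that for $n=1$, $l=j=1$ the label is $L^{(1)}_{(2,1)}$, not $L^{(1)}_{(1,1)}$.
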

\begin{definition}
\label{lambdaexpo}
For $l=1,\ldots,n$ and $j=1,\ldots,2^{n-l}$, we have the following expression
\[L_{(2^lj-2^{l-1}+1,l)}^{(n)}(y) = c^{(2^lj-2^{l-1}+1,n)}\Psi(q_{m(j,l)})e^{-\Psi(q_{m(j,l)})y}\prod_{\mathclap{\substack{i=1,\\ i\neq m(j,l)}}}^nq_i\prod_{i=l}^{m(j,l)}\frac{1}{q_{i}-q_{i+1}}\prod_{i=m(j,l)+1}^{n-1}\frac{1}{q_{m(j,l)}-q_{i+1}},\]
where $m(j,l) = \min\{ k\in\N: \ceil{\frac{2^lj-2^{l-1}+1}{2^{k}}}=1\}$. 
The terms $c^{(2^lj-2^{l-1}+1,n)}$ are given below in Lemma \ref{lemma sign 2}.  
\end{definition}

\begin{lemma}
\label{lemma sign 2}
Consider $j=1,2,\ldots,2^n$ and take the binary representation of $2^n-j$, $2^n-j=\beta_0\cdot2^0+\ldots+\beta_{n-1}\cdot2^{n-1}$. Then, for $c^{(j,n)}$ (or, equivalently, the sign of the $j$-th element in the $n$-th row of the tree presented above) we have the following formula
\[c^{(j,n)}=(-1)^{\PAR\{\beta_0,\beta_1,\ldots,\beta_{n-1}\}},\]
where $\PAR\{\beta_0,\ldots,\beta_{n-1}\}$ is $0$ if the number of 1's in the binary expansion of $2^n-j$ is even and 1 if it is odd.
\end{lemma}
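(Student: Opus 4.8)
The plan is to induct on $n$, reading a clean two-term sign recursion off the convolution identity~(\ref{transition spectrally negative}) and then evaluating it by counting bits. The base case $n=1$ is read directly from Thm.~\ref{spectrally negative one exponential time}: the term $-q_1W^{(q_1)}(x-y)$ sits in the first position and the term $+\Psi(q_1)e^{-\Psi(q_1)y}Z^{(q_1)}(x)$ in the second, so $c^{(1,1)}=-1$ and $c^{(2,1)}=+1$; this matches the claim, since $2^1-1=1$ carries one $1$-bit and $2^1-2=0$ carries none.

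For the inductive step I would pass from row $n-1$ to row $n$ using~(\ref{transition spectrally negative}), integrating each of the $2^{n-1}$ terms of the $(n-1)$-density against the single-epoch kernel $\Pb_z(\Q_{T_n}\in{\rm d}y)=\bigl(\Psi(q_n)e^{-\Psi(q_n)y}Z^{(q_n)}(z)-q_nW^{(q_n)}(z-y)\bigr){\rm d}y$. Every term thus splits into two children: a \emph{$W$-child}, produced by the $-q_nW^{(q_n)}(z-y)$ part of the kernel, and a \emph{$Z$-child}, produced by the $+\Psi(q_n)e^{-\Psi(q_n)y}Z^{(q_n)}(z)$ part. The decisive observation is that the $q$-difference factors generated by the integrals $\int_0^\infty e^{-\Psi(q_m)z}W^{(q_n)}(z-y)\,{\rm d}z$ and $\int_0^\infty e^{-\Psi(q_m)z}Z^{(q_n)}(z)\,{\rm d}z$, together with the new positive factor $q_n$, are exactly the factors $\prod\frac{1}{q_i-q_{i+1}}$, $\prod\frac{1}{q_m-q_{i+1}}$ and $\prod_{i\neq m}q_i$ appearing in Definition~\ref{lambdaexpo} as the index advances from $n-1$ to $n$. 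Once these are absorbed, the only residual change in the combinatorial coefficient is the leading scalar: a factor $-1$ along a $W$-edge and $+1$ along a $Z$-edge. Hence each $W$-child has combinatorial sign $-1$ times that of its parent, each $Z$-child has the same sign as its parent, and the root carries sign $+1$ (forced by the $n=1$ computation). As a check against the leading term, its $W$-child is again $(-1)^n\prod_{i=1}^nq_i\,(W^{(q_n)}\star\cdots\star W^{(q_1)})(x-y)$ (sign flipped from $(-1)^{n-1}$ to $(-1)^n$), while its $Z$-child is the freshly created root $\bigl(Z^{(q_n)}\star\cdots\star W^{(q_1)}\bigr)(x)$ (sign preserved).

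It follows that the sign of the $j$-th node of row $n$ equals $(-1)$ raised to the number of $W$-edges on the unique root-to-node path. Labelling each downward step by $0$ for the $W$-child and $1$ for the $Z$-child encodes this path as the $n$-bit word $j-1$, so the number of $W$-edges is the number of $0$-bits of $j-1$. Since $2^n-j=(2^n-1)-(j-1)$ is precisely the $n$-bit complement of $j-1$, the number of $0$-bits of $j-1$ equals the number of $1$-bits of $2^n-j$, whose parity is by definition $\PAR\{\beta_0,\ldots,\beta_{n-1}\}$; as $(-1)^k$ depends only on the parity of $k$, this gives $c^{(j,n)}=(-1)^{\PAR\{\beta_0,\ldots,\beta_{n-1}\}}$. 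One verifies directly that both the recursion and the closed form produce the row-$2$ signs $+,-,-,+$ and the row-$3$ signs $-,+,+,-,+,-,-,+$.

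The step I expect to be the crux is the absorption claim inside the inductive step: that each Laplace-transform integral of $W^{(q_n)}$ or $Z^{(q_n)}$ against $e^{-\Psi(q_m)z}$ reproduces, with the correct sign, precisely the additional $q$-difference factor prescribed by Definition~\ref{lambdaexpo}, leaving only the scalars $-q_n$ and $+\Psi(q_n)$ to influence $c^{(j,n)}$. This is exactly the bookkeeping carried out in the proof of Thm.~\ref{spectrally negative general case density}, on which I would rely; everything after it is the elementary bit count above.
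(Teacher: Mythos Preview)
Your argument is correct and takes a somewhat different route from the paper's. Both proofs rest on the same sign recursion --- the left ($W$-)child flips the parent's sign, the right ($Z$-)child preserves it --- but the paper verifies the closed form by a structural induction on $n$: it observes that the \emph{last} $2^k$ signs of row $k+1$ coincide with all of row $k$ (because $2^{k+1}-(j+2^k)=2^k-j$, so the binary expansions and hence parities agree), and then uses the row symmetry $c^{(j,k+1)}=-c^{(j+2^k,k+1)}$ to settle the first half. You instead count $W$-edges along the unique root-to-node path, identify that count with the number of $0$-bits in the $n$-bit word $j-1$, and finish with the bit-complement identity $2^n-j=(2^n-1)-(j-1)$. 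Your approach is a little more direct and makes the parity visible in a single stroke; the paper's is a clean verification that also yields the useful intermediate relations $c^{(j,k)}=c^{(j+2^k,k+1)}$ and $c^{(j,n)}=-c^{(j+2^{n-1},n)}$, which it exploits later as Remarks in the proof of Thm.~\ref{spectrally negative general case density}.

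One remark on your flagged ``crux'': there is no circularity in leaning on the integral evaluations from the proof of Thm.~\ref{spectrally negative general case density}. Those computations (of $\int e^{-\Psi(q_m)z}W^{(q_n)}(z-y)\,{\rm d}z$ and $\int e^{-\Psi(q_m)z}Z^{(q_n)}(z)\,{\rm d}z$) do not themselves use Lemma~\ref{lemma sign 2}; only the sign-matching step of that proof does. In fact the paper takes the recursion as established from the tree mechanism of Section~\ref{section spectrally negative} \emph{before} proving the lemma, so for the lemma itself you only need the bare recursion, not the full absorption bookkeeping.
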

\vspace{2mm}
Using the result obtained in Thm.\ \ref{spectrally negative general case density} we can find an expression for the transform with respect to the initial workload as well; again analytic continuation is used to obtain the result for any $\beta>0$.  
\begin{corollary}
\label{general triple transform}
For $\alpha>0$, ${\beta>0}$ and for $n$ independent exponentially distributed random variables $T_1,\ldots,T_n$ with distinct parameters $q_1,\ldots,q_n$, we have\[\int_0^\infty e^{-\beta x} \E_x e^{-\alpha \Q_{T_1+\ldots+T_n}}\mathrm{d}x  = c^{(1,n)}\prod_{i=1}^nq_i\frac{1}{\alpha+\beta}\prod_{i=1}^n\frac{1}{\Phi(\beta)-q_i}\]
\[+\sum_{l=1}^n\sum_{j=1}^{2^{n-l}}c^{(2^lj-2^{l-1}+1,n)}\prod_{\mathclap{\substack{i=1,\\ i\neq m(j,l)}}}^n q_i\hspace{-1mm}\prod_{i=l}^{m(j,l)}\hspace{-1mm} \frac{1}{q_{i}-q_{i+1}}\prod_{i=m(j,l)+1}^{n-1}\hspace{-1mm}\frac{1}{q_{m(j,l)}-q_{i+1}}\cdot \frac{\Psi(q_{m(j,l)})}{\alpha+\Psi(q_{m(j,l)})}\prod_{i=1}^{l}\frac{1}{\Phi(\beta)-q_i}\frac{\Phi(\beta)}{\beta},\]
where $m(j,l)$ and $c^{(2^lj-2^{l-1}+1,n)}$ are given in Definition \ref{lambdaexpo} and Lemma~\ref{lemma sign 2}.
\end{corollary}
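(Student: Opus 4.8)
The plan is to derive Corollary~\ref{general triple transform} directly from the density formula in Theorem~\ref{spectrally negative general case density} by applying the transform $\int_0^\infty e^{-\beta x}\int_0^\infty e^{-\alpha y}(\cdot)\,\mathrm{d}y\,\mathrm{d}x$ term by term. Since the density is already a finite sum of $2^n$ terms, linearity means that the entire computation reduces to transforming two prototype building blocks: the leading convolution term in $(x-y)$ and the generic term $L^{(n)}_{(2^lj-2^{l-1}+1,l)}(y)\,(Z^{(q_l)}\star W^{(q_{l-1})}\star\cdots\star W^{(q_1)})(x)$. The key structural observation is that each term factorizes into a part depending only on $x$ and a part (the coefficient $L$) depending only on $y$, so the double transform separates into a product of a $\beta$-transform in $x$ and an $\alpha$-transform in $y$.

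First I would handle the $\beta$-transform of the $x$-dependent pieces. For the leading term I need $\int_0^\infty e^{-\beta x}(W^{(q_n)}\star\cdots\star W^{(q_1)})(x-y)\,\mathrm{d}x$; because a convolution of scale functions transforms into a product of their individual transforms and each $W^{(q_i)}$ has transform $1/(\Phi(\beta)-q_i)$ by \eqref{W function}, this yields $e^{-\beta y}\prod_{i=1}^n 1/(\Phi(\beta)-q_i)$, where the factor $e^{-\beta y}$ comes from the shift by $y$. For the generic term I need $\int_0^\infty e^{-\beta x}(Z^{(q_l)}\star W^{(q_{l-1})}\star\cdots\star W^{(q_1)})(x)\,\mathrm{d}x$; using \eqref{Z scale function}, the transform of $Z^{(q_l)}$ is $\Phi(\beta)/(\beta(\Phi(\beta)-q_l))$, which combined with the $l-1$ factors $1/(\Phi(\beta)-q_i)$ produces exactly $\frac{\Phi(\beta)}{\beta}\prod_{i=1}^{l}\frac{1}{\Phi(\beta)-q_i}$, matching the $x$-part of the corollary. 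This step is essentially bookkeeping once the transform of $Z^{(q)}$ is in hand.

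Next I would carry out the $\alpha$-transform in $y$. The coefficient $L^{(n)}_{(2^lj-2^{l-1}+1,l)}(y)$ from Definition~\ref{lambdaexpo} is, up to a constant independent of $y$, equal to $\Psi(q_{m(j,l)})e^{-\Psi(q_{m(j,l)})y}$, so $\int_0^\infty e^{-\alpha y}L(y)\,\mathrm{d}y$ gives the factor $\Psi(q_{m(j,l)})/(\alpha+\Psi(q_{m(j,l)}))$ together with the $y$-independent product of $q_i$'s and the difference terms $\prod 1/(q_i-q_{i+1})$ and $\prod 1/(q_{m(j,l)}-q_{i+1})$, precisely as they appear in the statement. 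For the leading term, the $y$-dependence is carried entirely in the shift, so integrating $e^{-(\alpha+\beta)y}$ over $y\ge0$ contributes $1/(\alpha+\beta)$, accounting for that factor in the first summand of the corollary. Assembling the $\beta$-part and the $\alpha$-part for each term then reproduces the claimed formula verbatim.

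The main obstacle I anticipate is justifying the interchange of the two integrations and the convergence of the $\beta$-transform of the convolution term, since \eqref{W function} is only guaranteed for $\beta>\Psi(q)$ (and, for the product, for $\beta$ exceeding all $\Psi(q_i)$). I would therefore first establish the identity for $\beta$ large enough that every geometric/transform integral converges absolutely—allowing Fubini to split the double integral and permitting the convolution-to-product rule—and then invoke analytic continuation to extend the resulting rational-in-$\Phi(\beta)$ expression to all $\beta>0$, exactly as indicated in the remark following Theorem~\ref{spectrally negative one exponential time}. A secondary point requiring care is verifying that the index $m(j,l)$ and the sign $c^{(2^lj-2^{l-1}+1,n)}$ pass through both transforms unchanged; but since neither depends on $x$ or $y$, they are simply constant multipliers and come along automatically, so the bulk of the genuine work is the convergence-and-continuation argument rather than the algebra.
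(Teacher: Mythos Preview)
Your proposal is correct and follows essentially the same route as the paper: starting from the density in Theorem~\ref{spectrally negative general case density}, you apply the double transform term by term, reducing everything to the three computations the paper labels $\mathcal{K}_1$, $\mathcal{K}_2$, $\mathcal{K}_3$, and you even make explicit the analytic-continuation step that the paper only mentions in passing before stating the corollary. The only cosmetic difference is that you organize the calculation as ``$\beta$-transform first, then $\alpha$-transform,'' whereas the paper simply lists the three resulting integrals; the substance is identical.
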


The case in which some of the $q_i$'s are the same should be treated separately again. For instance, the density of $\Pb_x(\Q_T\in{\rm d}y)$ for $T$ having an Erlang-$n$ distribution follows after $n$ applications of l'H\^{o}pital's rule.

\section{Numerical Calculations}

In this section we present numerical illustrations of the transient workload behavior. We consider examples corresponding to the spectrally positive case (noting that the spectrally negative case can be dealt with similarly). The expression found in Thm.\ \ref{MainTheorem} is, from an algorithmic standpoint, highly attractive; the only drawback is that for every $n$ we have to compute $2^n$ terms, thus increasing the computation time significantly at every step.
In our illustrations, we consider the impact of $n$, i.e., the number of exponential variables. 
We also comment on ways to determine the workload distribution  at a fixed (deterministic, that is) time; the mean idea there, as we point out in more detail below, is to approximate a deterministic epoch $t$ by the sum of exponentially distributed random variables with appropriately chosen parameters. 

%An other technicality that has to be taken under consideration is the choice of the parameters $q_1,...,q_n$. We saw that the special case $q_1=...=q_n$ needs a special analysis; as we will see below there is a simple way to overcome this difficulty without having to apply l'H\^{o}pital's rule in the expression found in Theorem \ref{MainTheorem}. 

We focus on two specific L\'evy processes: Brownian motion and the Gamma process. For the case of Brownian motion, the input process $X$  is a Brownian motion with a drift, henceforth denoted by $X\in\BM(d,\sigma^2)$. Then, $\phi(\alpha):=\log\E e^{-\alpha X_1}=-\alpha d+\alpha^2\sigma^2 / 2$, and the right inverse function is 
\[\psi(q)=\frac{d+\sqrt{d^2+2\sigma^2q}}{\sigma^2}.\]
For {\em reflected Brownian motion} (i.e., the workload of a queue with Brownian motion as input) there is an explicit expression for the conditional distribution $\Pb(\Q_t\leq y|\Q_0=x)$, for $y>0$, see e.g.\ \cite[Section 1.6]{Har}. It is a matter of straightforward calculus to use this formula to find an expression for the transform
\[\E_xe^{-\alpha\Q_t}=\int_0^\infty e^{-\alpha y}\Pb(\Q_t\in{\rm d} y|\Q_0=x).\]
This result is used to evaluate the performance of our procedure in case a fixed time $t$ is approximated by the sum of exponentials.

The {\em Gamma} process is characterized by the L\'evy-Khintchine triplet $(d,\sigma^2,\Pi)$, where $\sigma^2=0$; the L\'evy measure is given, for some $\beta,\gamma>0$, by $\Pi({\rm d}x)=(\beta/x) \, e^{-\gamma x}$, for $x>0$, and the drift is $d=\int_0^1x\Pi({\rm d}x)$. From the definition of the L\'evy measure we see that the Gamma process is a spectrally positive process with a.s. non-decreasing sample paths. We also add  a negative drift  such that the Laplace transform is equal to 
\[\phi(\alpha) :=\log\E e^{-\alpha X_1} = \beta\log\left(\frac{\gamma}{\gamma+\alpha}\right)+\rho\alpha,\]
where $\rho> \beta / \gamma$ in case of a negative drift $d=(\beta/\gamma)-\rho$. For the Gamma process with parameters $\gamma,\beta>0$ and a drift $(\beta/\gamma)-\rho$ we use the notation $\mathbb{G}(\gamma,\beta,\rho)$. If the input is a Gamma process there is no explicit expression for the transform $\E_xe^{-\alpha\Q_t}$ in contrast with the case of a Brownian input.

Suppose now that we wish to characterize the distribution of $\Q_t$ for a deterministic $t$.
The idea is that we can approximate $t$ by a sum of, say $n$, independent exponential random variables. 
An optimal choice of the parameters $q_i$ then follows from solving the following constrained optimization problem:
\[\min\VAR(T_1+...+T_n) =\min\sum_{i=1}^n\frac{1}{q_i^2} \qquad \text{s.t.} \;\; \sum_{i=1}^n \E T_i=t.\]

This constrained optimization problem has solution $q_1=\ldots=q_n= n/t$. A complicating factor is that in Thm.\ \ref{MainTheorem} the parameters $q_i$ should be chosen distinct. To remedy this, we propose to impose a small perturbation of the optimal $q_i$'s such that they are distinct:
\begin{equation}
\label{parameter choice}
\frac{1}{q_i}=\frac{t}{n}(1+\alpha_i),
\end{equation}
where the $\alpha_i$ are suitably chosen {\em small} numbers that sum up to 0. In the two tables that follow we present the numerical results obtained from calculating the expression in Thm.\  \ref{MainTheorem} for the case $X\in\BM(-1,1)$ (Table \ref{Table for Brownian Motion}) and for the case $X\in\mathbb{G}(1,1,2)$ (Table \ref{Table for Gamma process}). Here, we consider the situation of $x=0$ and $t=1$. The parameters $q_i$ are chosen according to (\ref{parameter choice}) with, if $n$ is even, the $\alpha_i$'s given by
\[\alpha_i=\left\{\begin{array}{ll}
\hspace{2mm}0.01\cdot i &\mbox{ if } i=1,\ldots,\frac{n}{2}\\[1mm]
-0.01\cdot i &\mbox{ if } i=\frac{n}{2}+1,\ldots,n.
\end{array}\right.\]
(If $n$ is odd we choose $\alpha_{n+1/2}=0$ and the rest as indicated above.) In the first table, $X\in\BM(-1,1)$, we take $n=1,4,6,7,8$ and compare our approximations with the exact values obtained from $\E_x e^{-\alpha\Q_t}$ for different values of $\alpha$. In the last column we present the relative errors between the exact value and the approximation value for $n=8$. 

\begin{table}[t]
\caption{Numerical approximations for $X\in\BM(-1,1)$, $x=0$ and $t=1$} \label{Table for Brownian Motion}
  \begin{center}
      \begin{tabular}{| l | c | c | c | c | c | c | c | c | c|}

$$	&	$n=1$	&	$n=4$	&	$n=6$	&	$n=7$	&	$n=8$	&	exact value	&	relative error		\\ \hline
$\alpha=0.1$	&	0,9647	&	0,96064	&	0,96021	&	0,96005	&	0,96001	&	0,95914	&	-0,09	\%	\\ \hline
$\alpha=0.2$	&	0,9318	&	0,92410	&	0,92327	&	0,92299	&	0,92300	&	0,92128	&	-0,19	\%	\\ \hline
$\alpha=0.3$	&	0,9011	&	0,89008	&	0,88892	&	0,88851	&	0,88836	&	0,88611	&	-0,25	\% \\ \hline
$\alpha=0.4$	&	0,8723	&	0,85836	&	0,85688	&	0,85638	&	0,85608	&	0,85338	&	-0,32 \%		\\ \hline
$\alpha=0.5$	&	0,8453	&	0,82870	&	0,82696	&	0,82637	&	0,82590	&	0,82285	&	-0,37	\%	\\ \hline
$\alpha=0.6$	&	0,8199	&	0,80094	&	0,79896	&	0,79828	&	0,79786	&	0,79432	&	-0,44	\%	\\ \hline
$\alpha=0.7$	&	0,7960	&	0,77488	&	0,77270	&	0,77196	&	0,77237	&	0,76760	&	-0,62	\%	\\ \hline
$\alpha=0.8$	&	0,7735	&	0,75040	&	0,74803	&	0,74723	&	0,74625	&	0,74254	&	-0,50	\%	\\ \hline
$\alpha=0.9$	&	0,7522	&	0,72735	&	0,72482	&	0,72397	&	0,72415	&	0,71900	&	-0,71	\% \\ \hline
$\alpha=1$	&	0,7321	&	0,70562	&	0,70295	&	0,70205	&	0,70205	&	0,69684	&	-0,74	\%	\\ \hline

    \end{tabular}
\end{center}
\end{table}

\begin{table}[t]
\caption{Numerical approximations for $X\in\mathbb{G}(1,1,2)$, $x=0$ and $t=1$} \label{Table for Gamma process}
  \begin{center}
      \begin{tabular}{| l | l | l | l | l | l | l | l | l | l|}

$$	&	$n=1$	&	$n=4$	&	$n=5$	&	$n=6$	&	$n=7$	&	$n=8$	\\ \hline
$\alpha=0.1$	&	0,97582	&	0,99046	&	0,99037	&	0,99032	&	0,99028	&	0,99026	\\ \hline
$\alpha=0.2$	&	0,95527	&	0,98148	&	0,98130	&	0,98121	&	0,98112	&	0,98108	\\ \hline
$\alpha=0.3$	&	0,93754	&	0,97300	&	0,97275	&	0,97261	&	0,97249	&	0,97243	\\ \hline
$\alpha=0.4$	&	0,92205	&	0,96499	&	0,96465	&	0,96448	&	0,96432	&	0,96425	\\ \hline
$\alpha=0.5$	&	0,90838	&	0,95739	&	0,95699	&	0,95678	&	0,95659	&	0,95662	\\ \hline
$\alpha=0.6$	&	0,89621	&	0,95018	&	0,94972	&	0,94948	&	0,94925	&	0,94936	\\ \hline
$\alpha=0.7$	&	0,88530	&	0,94333	&	0,94281	&	0,94254	&	0,94228	&	0,94201	\\ \hline
$\alpha=0.8$	&	0,87543	&	0,93681	&	0,93623	&	0,93593	&	0,93565	&	0,93565	\\ \hline
$\alpha=0.9$	&	0,86647	&	0,93060	&	0,92996	&	0,92964	&	0,92933	&	0,92885	\\ \hline
$\alpha=1$	&	0,85828	&	0,92467	&	0,92398	&	0,92363	&	0,92330	&	0,92273	\\ \hline

    \end{tabular}
\end{center}
\end{table}

It should be realized that the numerical procedure has its limitations. First, from the expression in Thm.\ \ref{MainTheorem} we see that at every step we have to compute $2^n$ terms, which complicates the computation for $n$ large. We also see that when the parameters $q_i$ are `almost equal' i.e., $\alpha_i$ in (\ref{parameter choice}) is small) we add and  subtract terms that are large in absolute value (as the denominators featuring in the result of Thm.~\ref{MainTheorem} are close to zero), which potentially causes  instability. 
Our numerical tests show that the choice of the parameters $q_i$ influence the numerical stability; for the parameters indicated in (\ref{parameter choice}) the results begin to deviate for $n>9$ due to numerical issues. From Table \ref{Table for Brownian Motion}, corresponding to the case of a Brownian input process (for which we can compare with exact results), we see that for $n=8$ our relative error is below 1\%.
For the case of a Gamma input process, we verified that the transform converges to the steady-state workload as given by the {\em generalized Pollaczek-Khintchine} formula \cite[Thm.~3.2]{Man}.

As a second application of Thm.\ \ref{MainTheorem} we use the results obtained in Tables \ref{Table for Brownian Motion} and \ref{Table for Gamma process} to approximate the value of $\E_x\Q_t$ for the two cases $X\in\BM(-1,1)$ and $X\in\mathbb{G}(1,1,2)$, essentially relying on numerical differentiation. By considering an $\alpha$ sufficiently small and an $n$ sufficiently large we use the approximation 
\[\E_x\Q_t\sim\frac{1-\E_x e^{-\alpha\Q_{T_1+...+T_n}} }{\alpha}.\] 
We present our findings for the cases $X\in\BM(-1,1)$ and $X\in\mathbb{G}(1,1,2)$, respectively, displaying the qualitative behavior of $\E_x\Q_t$ as a function of time for various values of $x$. For the mean value of the stationary workload we know that $\E\Q={\phi''(0)}/({2\phi'(0)})$, as follows directly from the generalized Pollaczek-Khintchine formula. 

\begin{figure}[H]
\begin{centering}
\includegraphics[scale=0.8]{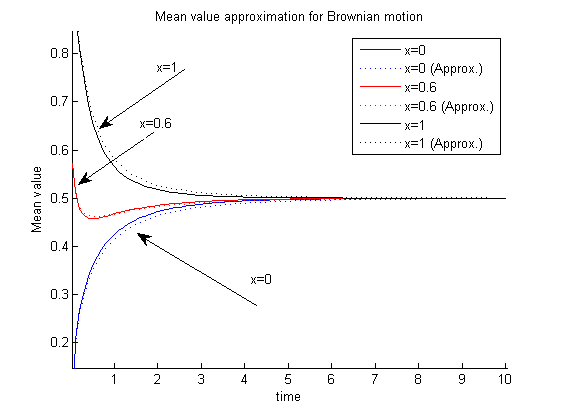}
\end{centering}
\caption{ Mean value approximation with $n=7$}
\label{BM graph}
\end{figure}

\begin{figure}[H]
\begin{centering}
\includegraphics[scale=0.8]{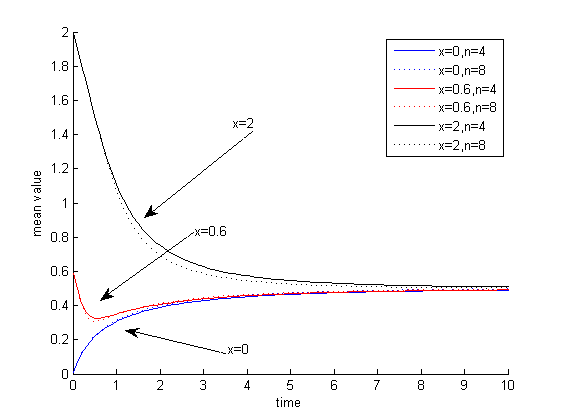}
\end{centering}
\caption{ Mean value for Gamma process}
\label{Gamma graph}
\end{figure}

In Figs.~\ref{BM graph} and \ref{Gamma graph} we observe three different scenarios corresponding to different values of the initial workload. When the initial workload is 0, the mean workload increases and converges to the mean value of the steady-state workload. This follows directly, as, for any L\'evy input process, $\Q_t\stackrel{\rm d}{=}\sup_{0\leq s\leq t}X_s$ when the initial workload is 0, implying that $\E_0\Q_t$ is increasing over time. When the initial workload is slightly above the steady-state workload, it is interesting to notice that $\E \Q_t$ first decreases below the steady-state version, and then converges from below. For higher initial workloads, $\E \Q_t$ is always decreasing and converges to the steady-state value from above.

\section{Proofs} \label{sec:proofs}

In this section we prove the main results. For both cases of spectrally one-sided processes, we first show an auxiliary lemma relating to the signs of each term. The main results are then proved using induction.

\subsection{Proof of Theorem \ref{MainTheorem}}

Before deriving the main result, we first  prove Lemma \ref{lemmasign}, which gives the sequence of the $2^n$ signs that appear in the expression of the transform at a time epoch corresponding to the sum of $n$ exponentially distributed random variables. 
From Thm.\ \ref{Spectrally Positive One} we see that for $n=1$ the signs of the coefficients are $+,-$. For $n=2$ and from Eqn.\ (\ref{case2}) we see that the signs are $+,-,-,+$ (where it is  noted that we use the ordering of the terms presented in Fig.\ \ref{tree graph 2}). Since we know how the terms are produced when we go from the step with $n$ exponential times to the step with $n+1$ exponential random variables (see Section~\ref{section spectrally positive}) we see that the signs at every step can be represented again by a tree graph. 
In this tree, row $n$ again consists of $2^n$ nodes and, starting from the left, the nodes represent the sign of every factor when the expression is written as in Eqn.\ (\ref{superposition}).

% Set the overall layout of the tree
\tikzstyle{level 1}=[level distance=1cm, sibling distance=7cm]
\tikzstyle{level 2}=[level distance=1cm, sibling distance=3cm]
\tikzstyle{level 3}=[level distance=1cm, sibling distance=1.5cm]
\tikzstyle{level 4}=[level distance=1cm, sibling distance=0.5cm]

% Define styles for bags and leafs
\tikzstyle{bag} = [text width=8em, text centered]
\tikzstyle{end} = [circle, minimum width=7pt,fill, inner sep=0pt]
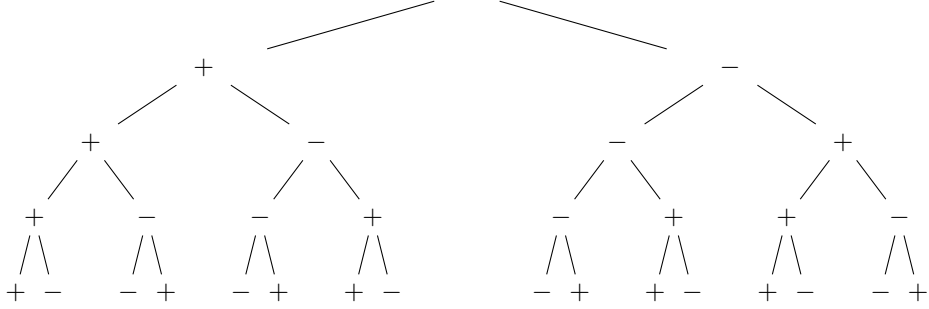
\begin{figure}[H]

\begin{tikzpicture}[grow=down, sloped]
\node[bag] {}
    child {
        node[bag] {$+$}
            child {
                node[bag] {$+$}
                     child{
                        node[bag] {$+$}
                           child{
                                node [bag] {$+$}
                                edge from parent
                                node[above] {$$}
                                node[below]  {$$}
                           }
                            child{
                                node [bag] {$-$}
                                edge from parent
                                node[above] {$$}
                                node[below]  {$$}
                           }
                        edge from parent
                        node[above] {$$}
                        node[below]  {$$}
                     }
                     child{
                       node[bag] {$-$}
                          child{
                                node [bag] {$-$}
                                edge from parent
                                node[above] {$$}
                                node[below]  {$$}
                           }
                            child{
                                node [bag] {$+$}
                                edge from parent
                                node[above] {$$}
                                node[below]  {$$}
                           }
                        edge from parent
                        node[above] {$$}
                        node[below]  {$$}
                     }
                    edge from parent
                    node[above] {$$}
                    node[below]  {$$}
            }
            child {
                node[bag] {$-$}
                    child{
                        node[bag] {$-$}
                            child{
                                node [bag] {$-$}
                                edge from parent
                                node[above] {$$}
                                node[below]  {$$}
                           }
                            child{
                                node [bag] {$+$}
                                edge from parent
                                node[above] {$$}
                                node[below]  {$$}
                           }
                        edge from parent
                        node[above] {$$}
                        node[below]  {$$}
                     }
                     child{
                       node[bag] {$+$}
                              child{
                                node [bag] {$+$}
                                edge from parent
                                node[above] {$$}
                                node[below]  {$$}
                           }
                            child{
                                node [bag] {$-$}
                                edge from parent
                                node[above] {$$}
                                node[below]  {$$}
                           }
                         edge from parent
                         node[above] {$$}
                         node[below]  {$$}
                     }
                edge from parent
                node[above] {$$}
                node[below]  {$$}
            }
            edge from parent
            node[above] {}
            node[below]  {$$}
    }
    child {
        node[bag] {$-$}
            child {
                node[bag] {$-$}
                     child{
                        node[bag] {$-$}
                            child{
                                node [bag] {$-$}
                                edge from parent
                                node[above] {$$}
                                node[below]  {$$}
                           }
                            child{
                                node [bag] {$+$}
                                edge from parent
                                node[above] {$$}
                                node[below]  {$$}
                           }
                        edge from parent
                        node[above] {$$}
                        node[below]  {$$}
                     }
                     child{
                       node[bag] {$+$}
                            child{
                                node [bag] {$+$}
                                edge from parent
                                node[above] {$$}
                                node[below]  {$$}
                           }
                            child{
                                node [bag] {$-$}
                                edge from parent
                                node[above] {$$}
                                node[below]  {$$}
                           }
                        edge from parent
                        node[above] {$$}
                        node[below]  {$$}
                     }
                edge from parent
                node[above] {$$}
                node[below]  {$$}
            }
            child {
                node[bag] {$+$}
                  child{
                        node[bag] {$+$}
                            child{
                                node [bag] {$+$}
                                edge from parent
                                node[above] {$$}
                                node[below]  {$$}
                           }
                            child{
                                node [bag] {$-$}
                                edge from parent
                                node[above] {$$}
                                node[below]  {$$}
                           }
                        edge from parent
                node[above] {$$}
                node[below]  {$$}
                     }
                  child{
                       node[bag] {$-$}
                          child{
                                node [bag] {$-$}
                                edge from parent
                                node[above] {$$}
                                node[below]  {$$}
                           }
                            child{
                                node [bag] {$+$}
                                edge from parent
                                node[above] {$$}
                                node[below]  {$$}
                           }
                     edge from parent
                node[above] {$$}
                node[below]  {$$}
                     }
                edge from parent
                node[above] {$$}
                node[below]  {$$}
            }
            edge from parent
            node[above] {}
            node[below]  {$$}
    };
\end{tikzpicture}
\caption{The sequence of the signs at every step}
\label{tree graph 3}

\end{figure}
We see that row $n+1$ can be derived from row $n$ when substituting every $+$ in row $n$ by the pair $+,-$, and every $-$ by the pair $-,+$. We can understand why this holds by looking at the expression in Thm.\ \ref{Spectrally Positive One} and the mechanism analyzed in Section \ref{section spectrally positive}. Denote by $c^{(j,n)}$ the sign of the $j$-th element in the $n$-th row in the above tree. Then $c^{(j,n)}$, for $j=1,2,\ldots,2^n$, corresponds to the sign of the $j$-th coefficient when considering $n$ exponentially distributed  in Eqn.\ (\ref{superposition}).

\begin{remark}\label{Samurai X Remark}{\em 
We observe that, because of symmetry, for the signs of the $k$-th row it holds that, for $j=1,\ldots,2^{k-1}$,
\[c^{(j,k)}=-c^{(j+2^{k-1},k)}.\]
 Hence the signs $j$ and $j+2^{k-1}$ in the $k$-th row will always be opposite.}
\end{remark}

\begin{proof}[Proof of Lemma \ref{lemmasign}]
We prove the lemma by induction on the number of exponentially distributed random variables. 

\begin{enumerate}
\item[(i)]
For $n=1$ we have two nodes and this case corresponds to the signs of the expression derived in Thm.\ \ref{Spectrally Positive One} for one exponentially distributed random variable $T$. We have that $c^{(1,1)}=+1$ and $c^{(2,1)}=-1$. Then we need the binary expansions of 0 and 1 which have no 1's and one 1, respectively. We see that $c^{(1,1)}=(-1)^0=1$ and $c^{(2,1)}=(-1)^1=-1$.
\item[(ii)]
We assume that the lemma holds for $n=k$. Hence, for $j=1,\ldots,2^k$, we have
\[c^{(j,k)}=(-1)^{\PAR\{b_0,\ldots,b_{k-1}\}}.\]
Here we make the following observation. In the tree presented above, consider an arbitrary row $n$. The $2^n$ signs of that row and the first $2^{n}$ signs of the $(n+1)$-th row are the same.

Now consider  the $(k+1)$-th row. Using the observation above and the induction hypothesis, the lemma holds for the first $2^k$ signs of this $(k+1)$-th row. Hence, we need to prove this statement only for $j=2^k+1,\ldots,2^{k+1}$.

For $j=1,2,\ldots,2^k$ we have
\[c^{(j,k+1)} = (-1)^{\PAR\{b_0,\ldots,b_{k-1}\}}= (-1)^{\PAR\{b_0,\ldots,b_{k-1},0\}}\]
where $j-1=b_0+b_1\cdot2+\ldots+b_{k-1}\cdot2^{k-1}+0\cdot2^k$. Consider now the element $j'=j+2^k$. From Remark~\ref{Samurai X Remark} we know that $c^{(j',k+1)}=-c^{(j,k+1)}$. We also know that the binary expansion of $j'$ has one more 1 than the binary expansion of $j$ since we add $2^k$, i.e., $j'-1=j-1+2^k=b_0+\ldots+b_{k-1}\cdot2^{k-1}+1\cdot2^k$, which shows that $(-1)^{\PAR\{b_0,\ldots,b_{k-1},1\}}=-(-1)^{\PAR\{b_0,\ldots,b_{k-1}\}}$ leading to
\[c^{(j,k+1)}=(-1)^{\PAR\{b_0,\ldots,b_k\}},\]
for all $j=1,2,\ldots,2^{k+1}$.
\end{enumerate}
\end{proof}

Before proceeding with the proof of Theorem \ref{MainTheorem} we present some general remarks which are used in the proofs of Thms.\ \ref{MainTheorem} and \ref{spectrally negative general case density}.

\begin{remark}{\em
\label{spirit lance remark}
For $l=2,\ldots,n$ and $j=1,\ldots,2^{n-l}$ we observe the following
\begin{enumerate}
\item[(a)] $2^lj-2^{l-1}+1$ is an odd number.
\item[(b)] For all $i=1,2,\ldots,l-2$,
\[\ceil*{\frac{2^lj-2^{l-1}+1}{2^i}}=\ceil*{2^{l-1-i}(2j-1)+\frac{1}{2^{i}}}=2^{l-i-1}(2j-1)+1,\]
which is always an odd number. In addition,
\[\ceil*{\frac{2^{l}j-2^{l-1}+1}{2^{l-1}}}=2j\] is an even number.
\item[(c)] 
\[\ceil*{\frac{2^lj-2^{l-1}+1}{2^l}}=\ceil*{j-\frac{1}{2}+\frac{1}{2^l}}=j\]
\item[(d)] For $i=0,1,\ldots$
\[\ceil*{\frac{2^lj-2^{l-1}+1}{2^{l+i}}}=\ceil*{\frac{j}{2^i}-\frac{1}{2^{i+1}}+\frac{1}{2^{l+i}}}=\ceil*{\frac{j}{2^{i}}}.\]
\end{enumerate}}
\end{remark}

\begin{proof}[Proof of Theorem \ref{MainTheorem}]
We use induction on the number of exponential random variables $T_1,...,T_n$. For the proof it is sufficient to start with $n=1$ (where it can be readily checked that Thm.\ \ref{MainTheorem} holds for $n=1$), but the case $n=2$ is more instructive. The joint transform for $n=2$ can be found in Eqn.\ (\ref{case2}). 

First of all, when $n=2$ we have in total $2^2=4$ terms. We see that the even terms correspond to $\exp[{-\psi(q_1)x}]$, and the third term corresponds to $\exp[{-(\alpha_1+\psi(q_2))x}]$.  According to (\ref{superposition}) the coefficient of $\exp[{-(\alpha_1+\alpha_2)x}]$ must be equal to
\[\frac{q_2}{q_2-\phi(\alpha_2)}\frac{q_1}{q_1-\phi(\alpha_1+\alpha_2)},\]
following directly from (\ref{case2}). 
We have two coefficients corresponding to $\exp[{-\psi(q_1)}]$, which according to (\ref{superposition}) should be equal to $L_{(2,1)}^{(2)}$ and $L_{(4,1)}^{(2)}$. Using Definition~\ref{DefinitionL}, we find the following expressions
\[L_{(2,1)}^{(2)}=-\prod_{i=1}^2\frac{q_i}{q_i-\phi(\alpha_i+d^{(i,2)})}
\prod_{i=1}^2 \frac{\alpha_i+d^{(i,2)}}{d^{(i-1,2)}}
=-\frac{q_2}{q_2-\phi(\alpha_2)}\frac{q_1}{q_1-\phi(\alpha_1+\alpha_2)}\frac{\alpha_1+\alpha_2}{\psi(q_1)},\]
as $d^{(0,2)}=\psi(q_1)$, $d^{(1,2)}=\alpha_2$, and $d^{(2,2)}=0$. Moreover,
\[L_{(4,1)}^{(2)}=\prod_{i=1}^2\frac{q_i}{q_i-\phi(\alpha_i+d^{(i,4)})} 
\prod_{i=1}^2 \frac{\alpha_i+d^{(i,4)}}{d^{(i-1,4)}},\]
where we see from the table for the factors $d^{(i,j)}$ (see Definition \ref{DefinitionL}) that $d^{(0,4)}=\psi(q_1)$, $d^{(1,4)}=\psi(q_2)$, and $d^{(2,4)}=0$. This leads to the following result
\[L_{(4,1)}^{(2)}=\frac{q_2}{q_2-\phi(\alpha_2)}\frac{\alpha_2}{\psi(q_2)}\frac{q_1}{q_1-\phi(\alpha_1+\psi(q_2))}\frac{\alpha_1+\psi(q_2)}{\psi(q_1)}.\]
For the last term, the coefficient of $e^{-(\alpha_1+\psi(q_2))x}$, we get
\[L_{(3,2)}^{(2)}=-\frac{q_1}{q_1-\phi(\alpha_1+\psi(q_2))}\frac{q_2}{q_2-\phi(\alpha_2)}
\prod_{i=2}^2 \frac{\alpha_i+d^{(i,3)}}{d^{(i-1,3)}}. \]
Since $d^{(1,3)}=\psi(q_2)$ and $d^{(2,3)}=0$, this agrees with Eqn.\ (\ref{superposition}), and thus the results holds for $n=2$.

\bigskip \noindent
We now assume that our formula holds for $n=k-1$. Hence we have that
\[ \E_xe^{-\alpha_1\Q_{T_1}-...-\alpha_{k-1}\Q_{T_1+\ldots+T_{k-1}}}=\prod_{i=1}^{k-1}\frac{q_i}{q_i-\phi(\alpha_{i}+\ldots+\alpha_{k-1})}e^{-(\alpha_1+\ldots+\alpha_{k-1})x}\]
\begin{equation}
\label{superposition2}
+\sum_{l=1}^{k-1}\sum_{j=1}^{2^{k-l-1}}L_{(2^{l}j-2^{l-1}+1,l)}^{(k-1)}e^{-(\alpha_1+\ldots+\alpha_{l-1}+\psi(q_l))x},
\end{equation}
where the coefficients $L_{(2^{l}j-2^{l-1}+1,l)}^{(k-1)}$ are given by Definition \ref{DefinitionL} for $n=k-1$ and the signs of all the factors are given by Lemma \ref{lemmasign}. In the induction step we prove this theorem for $n=k$ given that it holds for $n=k-1$. The expression for $n=k$ is derived from calculating the  integral
\[
\mathcal{L} := \E_xe^{-\alpha_1\Q_{T_1}-\ldots-\alpha_{k}\Q_{T_1+\ldots+T_{k}}} %\mathcal{I}
=\int_0^\infty e^{-\alpha_1y}\E_ye^{-\alpha_2\Q_{T_2}-...-\alpha_k\Q_{T_2+\ldots+T_k}}\Pb_x(\Q_{T_1}\in {\rm d}y),\]
where the expectation in the integral is known by the induction hypothesis. Here we see that we must raise all indices in (\ref{superposition2}) by one when we do the calculations because we start from time $T_2$ with parameter $q_2$ instead of from $T_1$. Combining the above with (\ref{superposition2}), we obtain
\begin{eqnarray}
%\E_xe^{-\alpha_1\Q_{T_1}-...-\alpha_{k}\Q_{T_1+...+T_{k}}} 
\mathcal{L} &=&\prod_{i=1}^{k-1}\frac{q_{i+1}}{q_{i+1}-\phi(\alpha_{i+1}+...+\alpha_k)}\int_0^\infty e^{-(\alpha_1+...+\alpha_k)y}\Pb_x(\Q_{T_1}\in{\rm d}y) \nonumber \\
 & &+\sum_{l=1}^{k-1}\sum_{j=1}^{2^{k-1-l}}L_{(2^lj-2^{l-1}+1,l)}^{(k-1)}\int_0^\infty e^{-(\alpha_1+...+\alpha_{l}+\psi(q_{l+1}))y}\Pb_x(\Q_{T_1}\in{\rm d}y) =: \mathcal{I} + \mathcal{II} \label{Hades equation}
\end{eqnarray}
The two integrals in (\ref{Hades equation}) can be computed using Thm.\ \ref{Spectrally Positive One}. Each integral gives two new terms, corresponding to a move down and left for the first term, and down and right for the second term in the trees presented in Figs.\ \ref{tree diagram 1} and \ref{tree graph 2}. The exponents are easily observed after an application of Thm.\ \ref{Spectrally Positive One}. Therefore, below we primarily focus on the coefficients. When considering such integrals the two terms obtained are referred to as the first and second term and are denoted by adding a 1 or 2 as indices to $\mathcal{I}$ and $\mathcal{II}$. We now successively consider (the coefficients of) $\mathcal{I}_1$,  $\mathcal{II}_1$,  $\mathcal{I}_2$, and  $\mathcal{II}_2$.

\bigskip \noindent
$\circ$ {\it Coefficient of $\mathcal{I}_1$.} \hspace{1mm}
The coefficient of $\exp[{-(\alpha_1+...+\alpha_k)x}]$ is found, using Thm.\ \ref{Spectrally Positive One}, from the first term of the integral
\[\int_0^\infty e^{-\alpha_1 y}\prod_{i=1}^{k-1}\frac{q_{i+1}}{q_{i+1}-\phi(\alpha_{i+1}+...+\alpha_{k})}e^{-(\alpha_2+...+\alpha_k)y}\Pb_x(\Q_{T_1}\in {\rm d}y),\]
which is
\begin{equation}
\label{Dias equation}
\prod_{i=2}^k\frac{q_i}{q_i-\phi(\alpha_i+...+\alpha_k)}\frac{q_1}{q_1-\phi(\alpha_1+...+\alpha_k)}=\prod_{i=1}^{k}\frac{q_{i}}{q_{i}-\phi(\alpha_i+...+\alpha_{k})}.
\end{equation}
This corresponds to the coefficient of the first term in Thm.\ \ref{MainTheorem}.

\bigskip \noindent
$\circ$ {\it Coefficient of $\mathcal{II}_1$.} \hspace{1mm}
 For $l=2,3,\ldots,k$ it is seen that the terms $L_{(2^lj-2^{l-1}+1,l)}^{(k)}$ for $j=1,2,\ldots,2^{k-l}$ can be derived from  the terms $L_{(2^{l-1}j-2^{l-2}+1,l-1)}^{(k-1)}$ by taking the first term of the integrals (this corresponds to a move down and left when we look at the tree in Fig.\ \ref{tree graph 2}):
\begin{equation}
\label{integraal}
\int_0^\infty L_{(2^{l-1}j-2^{l-2}+1,l-1)}^{(k-1)}e^{-\alpha_1y}e^{-(\alpha_2+\ldots+\alpha_{l-1}+\psi(q_l))y}\Pb_x(\Q_{T_1}\in { \rm d}y).
\end{equation}
From Thm.\  \ref{Spectrally Positive One} we obtain 
\begin{eqnarray*}
L_{(2^lj-2^{l-1}+1,l)}^{(k)} 
&=&L_{(2^{l-1}j-2^{l-2}+1,l-1)}^{(k-1)}\cdot\frac{q_1}{q_1-\phi(\alpha_1+\ldots+\alpha_{l-1}+\psi(q_l))}\\
&=&c^{(2^{l-1}j-2^{l-2}+1,k-1)}\cdot\prod_{i=1}^{k-1}\frac{q_{i+1}}{q_{i+1}-\phi(\alpha_{i+1}+\bar{d}^{(i+1,2^{l-1}j-2^{l-2}+1)})}\cdot\\
& &\prod_{i=l-1}^{k-1}\frac{\alpha_{i+1}+\bar{d}^{(i+1,2^{l-1}j-2^{l-2}+1)}}{\bar{d}^{(i,2^{l-1}j-2^{l-2}+1)}}\cdot\frac{q_1}{q_1-\phi(\alpha_1+\ldots+\alpha_{l-1}+\psi(q_l))}\\
&=&c^{(2^{l-1}j-2^{l-2}+1,k-1)}\cdot\prod_{i=2}^k\frac{q_i}{q_i-\phi(\alpha_i+\bar{d}^{(i,2^{l-1}j-2^{l-2}+1)})}\cdot\\
& &\prod_{i=l}^k\frac{\alpha_i+\bar{d}^{(i,2^{l-1}j-2^{l-2}+1)}}{\bar{d}^{(i-1,2^{l-1}j-2^{l-2}+1)}}\frac{q_1}{q_1-\phi(\alpha_1+\ldots+\alpha_{l-1}+\psi(q_l))},
\end{eqnarray*}
where $j=1,2,\ldots,2^{k-l}$; here $\bar{d}^{(i,2^{l-1}j-2^{l-2}+1)}$ is given by 
\[\bar{d}^{(i,2^{l-1}j-2^{l-2}+1)}= \left \{ \begin{array}{ll}
\alpha_{i+1}+\bar{d}^{(i+1,2^{l-1}j-2^{l-2}+1)} &\mbox{ if } \ceil*{\frac{2^{l-1}j-2^{l-2}+1}{2^{i-1}}} \text{ is odd,}\\
\\
\psi(q_{i+1}) &\mbox{ if } \ceil*{\frac{2^{l-1}j-2^{l-2}+1}{2^{i-1}}} \text{ is even}.
\end{array} \right. \]
This table follows from Definition~\ref{DefinitionL} and the observation that the factor $\bar{d}^{(i,2^{l-1}j-2^{l-2}+1)}$ initially was the factor added to the term $\alpha_{i-1}$ (this is why we use the notation $\bar{d}$ for these terms); this is due to the fact that in (\ref{integraal}) all indices are raised by one. In order to bring this into the form of Definition \ref{DefinitionL} we observe the following:
\begin{enumerate}
\item[(a)]
Concerning the signs we have the relation $c^{(2^{l-1}j-2^{l-2}+1,k-1)}=c^{(2^lj-2^{l-1}+1,k)}$ for all $l=2,...,k$ and $j=1,...,2^{k-l}$. We see this as follows. From Lemma \ref{lemmasign} we see it is sufficient to show that the numbers $2^{l-1}j-2^{l-2}$ and $2^lj-2^{l-1}$ have the same parity. But this holds as $2^lj-2^{l-1}=2(2^{l-1}j-2^{l-2})$. Intuitively we can see this from the tree graph in Fig.\ \ref{tree graph 3}; every time we move down and left the sign is always the same.
\item[(b)] Concerning the labeling of the terms, using the fact that
\[\ceil*{\frac{2^{l-1}j-2^{l-2}+1}{2^{i-1}}}=\ceil*{\frac{2^{l}j-2^{l-1}+1}{2^{i}}}\]
(which we obtain from Remark \ref{spirit lance remark}) we obtain
\begin{equation}
\label{genji sword}
d^{(i,2^{l}j-2^{l-1}+1)}=\bar{d}^{(i,2^{l-1}j-2^{l-2}+1)}.
\end{equation}
\item [(c)] From the four properties in Remark \ref{spirit lance remark} we see that $d^{(1,2^lj-2^{l-1}+1)}=\alpha_2+\ldots+\alpha_{l-1}+\psi(q_l)$ and
\[\frac{\alpha_i+d^{(i,2^lj-2^{l-1}+1)}}{d^{(i-1,2^lj-2^{l-1}+1)}}=1,\]
for all $i=1,2,\ldots,l-1$.
\end{enumerate}
The arguments in (a)-(c) show that, for $l=2,3,\ldots,n$, $j=1,2,\ldots,2^{k-l}$,
\begin{align*}
L_{(2^lj-2^{l-1}+1,l)}^{(k)} =& c^{(j, k)}\cdot\prod_{i=1}^k\frac{q_i}{q_i-\phi(\alpha_i+d^{(i,2^lj-2^{l-1}+1)})}\cdot\prod_{i=l}^{k}\frac{\alpha_i+d^{(i,2^lj-2^{l-1}+1)}}{d^{(i-1,2^lj-2^{l-1}+1)}},
\end{align*}
where the $d^{(i,2^lj-2^{l-1}+1)}$ are given by the table in Definition \ref{DefinitionL}.

\bigskip \noindent
$\circ$ {\it Coefficient of $\mathcal{I}_2$.} \hspace{1mm}
For the terms $L_{(2j,1)}^{(k)}$, $j=1,2,\ldots,2^{k-1}$ (i.e., the coefficients of $\exp[{-\psi(q_1)x}]$ for $k$ exponentially distributed random variables) we observe that these are given from all terms in the previous step, one from each (this corresponds to moving down and right in the tree graph in Fig.\ \ref{tree diagram 1} or Fig.\ \ref{tree graph 2}). The first term, $L_{(2,1)}^{(k)}$ results from the integration
\[\int_0^\infty \prod_{i=1}^{k-1}\frac{q_{i+1}}{q_{i+1}-\phi(\alpha_{i+1}+\ldots+\alpha_{k})}e^{-(\alpha_1+\ldots+\alpha_k)y}\Pb_x(\Q_{T_1}\in{\rm d}y),\]
which leads to
\[L_{(2,1)}^{(k)} = -\prod_{i=2}^k\frac{q_i}{q_i-\phi(\alpha_i+\ldots+\alpha_k)}\frac{q_1}{q_1+\phi(\alpha_1+\ldots+\alpha_k)}\frac{\alpha_1+\ldots+\alpha_k}{\psi(q_1)}.\]
Since $l=1$ and $j=1$, we have for $i=1,2,\ldots,k$
\[\ceil*{\frac{2}{2^i}}=1,    \]
showing that $d^{(i,2)} = \sum_{s=i+1}^k\alpha_s$. Furthermore, we see that for all $i=2,3,\ldots,k$
\[\frac{\alpha_i+d^{(i,1)}}{d^{(i-1,2)}}=1, \]
and, hence, we get
\[\prod_{i=1}^k\frac{\alpha_i+d^{(i,2)}}{d^{(i-1,2)}}= \frac{\alpha_1+d^{(1,2)}}{d^{(0,2)}}=\frac{\alpha_1+\ldots+\alpha_k}{\psi(q_1)}.\]
By using these facts, it follows that
\begin{equation}
\label{Poseidon equation}
L_{(2,1)}^{(k)} = -\prod_{i=1}^k\frac{q_i}{q_i-\phi(\alpha_i+d^{(i,2
)})}\prod_{i=1}^k\frac{\alpha_i+d^{(i,2)}}{d^{(i-1,2)}},
\end{equation}
corresponding to Definition \ref{DefinitionL} and Lemma \ref{lemmasign}.

\bigskip \noindent
$\circ$ {\it Coefficient of $\mathcal{II}_2$.} \hspace{1mm}
In general, the terms $L_{(2^{l+1}j-2^l+2,1)}^{(k)}$, for $l=1,2,\ldots,k-1$ and $j=1,2,\ldots,2^{k-1-l}$, are derived from the integrals
\begin{equation}
\label{integral}
\int_0^\infty L_{(2^lj-2^{l-1}+1,l)}^{(k-1)}e^{-\alpha_1y}e^{-(\alpha_2+\ldots+\alpha_{l}+\psi(q_{l+1}))y}\Pb_x(\Q_{T_1}\in{\rm d}y).
\end{equation}
Consider the terms $L_{(2^{l+1}j-2^l+2,1)}^{(k)}$ for $l=1,\ldots,k-1$ and $j=1,\ldots,2^{k-1-l}$.
%; observe that these can be  derived from the terms $L_{(2^lj-2^{l-1}+1,l)}^{(k-1)}$.  
From the integral in (\ref{integral}) we obtain, for $l=1,\ldots,k-2$ and $j=1,2,\ldots,2^{k-2-l}$, that
\begin{align*}
L_{(2^{l+1}j-2^l+2,1)}^{(k)}=&-c^{(2^lj-2^{l-1}+1,k-1)}\cdot\prod_{i=1}^{k-1}\frac{q_{i+1}}{q_{i+1}-\phi(\alpha_{i+1}+\bar{d}^{(i+1,2^{l}j-2^{l-1}+1)})} \\
&\cdot\prod_{i=l}^{k-1}\frac{\alpha_{i+1}+\bar{d}^{(i+1,2^lj-2^{l-1}+1)}}{\bar{d}^{(i,2^lj-2^{l-1}+1)}}\cdot\frac{q_1}{q_1-\phi(\alpha_1+...+\alpha_l+\psi(q_{l+1}))}\cdot\frac{\alpha_1+...+\alpha_l+\psi(q_{l+1})}{\psi(q_1)}\\
=&-c^{(2^lj-2^{l-1}+1,k-1)}\cdot\prod_{i=2}^{k}\frac{q_{i}}{q_{i}-\phi(\alpha_{i}+\bar{d}^{(i,2^{l}j-2^{l-1}+1)})} \\
&\cdot\prod_{i=l+1}^{k}\frac{\alpha_{i}+\bar{d}^{(i,2^lj-2^{l-1}+1)}}{\bar{d}^{(i-1,2^lj-2^{l-1}+1)}}\cdot\frac{q_1}{q_1-\phi(\alpha_1+\ldots+\alpha_l+\psi(q_{l+1}))}\cdot\frac{\alpha_1+\ldots+\alpha_l+\psi(q_{l+1})}{\psi(q_1)},
\end{align*}
where the factors $\bar{d}^{(i,2^{l}j-2^{l-1}+1)}$ are given by 
\[ \bar{d}^{(i,2^{l}j-2^{l-1}+1)}= \left\{ \begin{array}{ll}
\alpha_{i+1}+\bar{d}^{(i+1,2^{l}j-2^{l-1}+1)} &\mbox{ if } \ceil*{\frac{2^{l}j-2^{l-1}+1}{2^{i-1}}} \text{ is odd,}\\
\\
\psi(q_{i+1}) &\mbox{ if } \ceil*{\frac{2^{l}j-2^{l-1}+1}{2^{i-1}}} \text{ is even}.
\end{array} \right. \]
Using the same observation as in (\ref{genji sword}), it is found, for $j=1,\ldots,2^{k-l-1}$ and $i=l+1,...,k$, that
\[d^{(i,2^{l+1}j-2^{l}+2)}=\bar{d}^{(i,2^lj-2^{l-1}+1)}.\]
From Remark \ref{spirit lance remark} (a)-(c) we see that
\[d^{(1,2^{l+1}j-2^l+2)}=\alpha_2+\ldots+\alpha_l+\psi(q_{l+1}),\:\:\:\:\:\:d^{(0,2^{l+1}j-2^l+2)}=\psi(q_1)\]
and, for $i=2,3,\ldots,l$,
\[\frac{\alpha_i+d^{(i,2^{l+1}j-2^l+2)}}{d^{(i-1,2^{l+1}j-2^l+2)}}=1.\]
These observations allow us to write  $L_{(2^{l+1}j-2^l+2,1)}^{(k)}$ as follows
\[L_{(2^{l+1}j-2^l+2,1)}^{(k)}=-c^{(2^lj-2^{l-1}+1,k-1)}\cdot\prod_{i=1}^{k}\frac{q_{i}}{q_{i}-\phi(\alpha_{i}+d^{(i,2^{l+1}j-2^{l}+2)})}\cdot\prod_{i=1}^{k}\frac{\alpha_{i}+d^{(i,2^{l+1}j-2^{l}+2)}}{d^{(i-1,2^{l+1}j-2^{l}+2)}}.\]
Concerning the signs, we obtain the relation $c^{(2^{l+1}j-2^l+2,k)}=-c^{(2^lj-2^{l-1}+1,k-1)}$ since the numbers $2^{l+1}j-2^l+1=2(2^lj-2^{l-1})+1$ and $2^lj-2^{l-1}$ have opposite parities. This final expression agrees with those presented in Thm.\ \ref{MainTheorem}.

\bigskip \noindent
Now, we combine the above results to complete the proof. Using the coefficients of $\mathcal{I}_1$ and $\mathcal{I}_2$, i.e., (\ref{Dias equation}) and (\ref{Poseidon equation}), we can rewrite (\ref{Hades equation}) to
\begin{align*}
\mathcal{L}=&\prod_{i=1}^k\frac{q_i}{q_i-\phi(\alpha_i+...+\alpha_k)}e^{-(\alpha_1+\ldots+\alpha_k)x}-\prod_{i=1}^k\frac{q_i}{q_i-\phi(\alpha_i+d^{(i,2)})}\prod_{i=1}^k\frac{\alpha_i+d^{(i,2)}}{d^{(i-1,2)}}e^{-\psi(q_1)x}\\
&+\sum_{l=2}^{k}\sum_{j=1}^{2^{k-l}}L_{(2^{l-1}j-2^{l-2}+1,l-1)}^{(k-1)}\int_0^\infty e^{-(\alpha_1+\ldots+\alpha_{l-1}+\psi(q_l))y}\Pb_x(\Q_{T_1}\in{\rm d}y).
\end{align*}
Using the definition of $L_{(2,1)}^{(k)}$, in conjunction with the coefficients $\mathcal{II}_1$ and $\mathcal{II}_2$ and Definition \ref{DefinitionL}, the above expression can be written as 
\begin{align*}
\mathcal{L}=&\prod_{i=1}^k\frac{q_i}{q_i-\phi(\alpha_i+...+\alpha_k)}e^{-(\alpha_1+\ldots+\alpha_k)x}-L_{(2,1)}^{(k)}e^{-\psi(q_1)x}\\
&+\sum_{l=2}^{k}\sum_{j=1}^{2^{k-l}}L_{(2^lj-2^{l-1}+1,l)}^{(k)}e^{-(\alpha_1+\ldots+\alpha_{l-1}+\psi(q_l))x}\hspace{-2pt}+\hspace{-2pt}\sum_{l=2}^k\sum_{j=1}^{2^{k-l}}L_{(2^lj-2^{l-1}+2,1)}^{(k)}e^{-\psi(q_1)x}.
\end{align*}
It remains to write the last sum in the desired form. This double sum has in total $2^{k-1}-1$ terms, and we observe that for $l=2,\ldots,k$ and $j=1,\ldots,2^{k-l}$,  $2^lj-2^{l-1}+2$ defines a partition of the even numbers $4,6,\ldots,2^k$ into $k-1$ classes each one containing $2^{k-l}$ numbers. Relabeling the terms with only one subscript, we can write this double sum as
\[\sum_{l=2}^k\sum_{j=1}^{2^{k-l}}L_{(2^lj-2^{l-1}+2,1)}^{(k)}e^{-\psi(q_1)x}=\sum_{i=2}^{2^{k-1}}L_{(2i,1)}^{(k)}e^{-\psi(q_1)x},\]
where $i=2^lj-2^{l-1}+2$ for $l=2,\ldots,k$ and $j=1,\ldots,2^{k-l}$. 
From the above it follows that $\mathcal{L}$ can be written as the expression in Thm.~\ref{MainTheorem}. This completes the proof.

\end{proof}

\subsection{Proof of Theorem \ref{spectrally negative general case density}}

For the spectrally negative case we use a tree graph to illustrate, similar to Fig.\ \ref{tree graph 2}, how the coefficients of the convolution terms change from step $n$ to $n+1$. Based on the reasoning that led us to the tree diagram in Fig.\ \ref{tree spectrally negative} for the spectrally negative case, we adopt the numbering of the coefficients $L_{(2^lj-2^{l-1}+1,l)}^{(n)}$ as in Fig.~\ref{tree graph coefficients spectrally negative}.

% Set the overall layout of the tree
\tikzstyle{level 1}=[level distance=1.5cm, sibling distance=8cm]
\tikzstyle{level 2}=[level distance=1.5cm, sibling distance=4cm]
\tikzstyle{level 3}=[level distance=1.5cm, sibling distance=2cm]

% Define styles for bags and leafs
\tikzstyle{bag} = [text width=5.5em, text centered]
\tikzstyle{end} = [circle, minimum width=7pt,fill, inner sep=0pt]

\begin{figure}[H]
\begin{tikzpicture}[grow=down, sloped]
\node[bag] {}
    child {
        node[bag] {$L_1^{(1)}$}
            child {
                node[bag] {$L_1^{(2)}$}
                     child{
                        node[bag] {$L_1^{(3)}$}
                        edge from parent
                        node[above] {$$}
                        node[below]  {$$}
                     }
                     child{
                       node[bag] {$L_{5,3}^{(3)}$}
                        edge from parent
                        node[above] {$$}
                        node[below]  {$$}
                     }
                    edge from parent
                    node[above] {$$}
                    node[below]  {$$}
            }
            child {
                node[bag] {$L_{3,2}^{(2)}$}
                    child{
                        node[bag] {$L_{3,2}^{(3)}$}
                        edge from parent
                        node[above] {$$}
                        node[below]  {$$}
                     }
                     child{
                       node[bag] {$L_{7,2}^{(3)}$}
                         edge from parent
                         node[above] {$$}
                         node[below]  {$$}
                     }
                edge from parent
                node[above] {$$}
                node[below]  {$$}
            }
            edge from parent
            node[above] {}
            node[below]  {$$}
    }
    child {
        node[bag] {$L_{2,1}^{(1)}$}
            child {
                node[bag] {$L_{2,1}^{(2)}$}
                     child{
                        node[bag] {$L_{2,1}^{(3)}$}
                        edge from parent
                        node[above] {$$}
                        node[below]  {$$}
                     }
                     child{
                       node[bag] {$L_{6,1}^{(3)}$}
                        edge from parent
                        node[above] {$$}
                        node[below]  {$$}
                     }
                edge from parent
                node[above] {$$}
                node[below]  {$$}
            }
            child {
                node[bag] {$L_{4,1}^{(2)}$}
                  child{
                        node[bag] {$L_{4,1}^{(3)}$}
                        edge from parent
                node[above] {$$}
                node[below]  {$$}
                     }
                  child{
                       node[bag] {$L_{8,1}^{(3)}$}
                     edge from parent
                node[above] {$$}
                node[below]  {$$}
                     }
                edge from parent
                node[above] {$$}
                node[below]  {$$}
            }
            edge from parent
            node[above] {}
            node[below]  {$$}
    };

\end{tikzpicture}
\caption{The coefficients of the convolution terms}
\label{tree graph coefficients spectrally negative}
\end{figure}
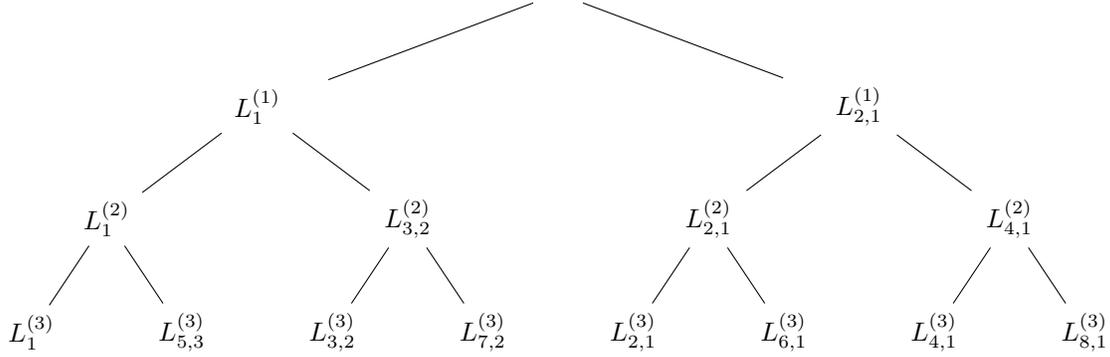
Comparing this tree graph with the one corresponding to the spectrally positive case (i.e.,  Fig.\ \ref{tree graph 2}), we observe that now the numbering of the coefficients is `mixed'. In the spectrally positive case every time we went right a new subtree with root $\psi(q_1)$ was generated, whereas here only the first time we turn right a subtree with root $\left(Z^{(q_l)}\star W^{(q_{l-1})}\star\ldots\star W^{(q_1)}\right)(x)$ is generated. So, in the spectrally negative case, convolution terms of the same order gather in one subtree.  When moving downwards in the subtree the coefficients change according to a recursive pattern. This mixed enumeration of the coefficients is due to these two characteristics. In the spectrally positive case terms of all possible orders are generated in all subtrees generated by some $\psi(q_1)$. From Fig.\ \ref{tree graph coefficients spectrally negative} we see that the coefficient $L_{(2^lj-2^{l-1}+1,l)}^{(n)}$ generates the coefficients $L_{(2^lj-2^{l-1}+1,l)}^{(n+1)}$ and $L_{(2^lj-2^{l-1}+1+2^{n},l)}^{(n+1)}$. At step $n$ we observe that, for $l=1,\ldots,n$, we have a total of $2^{n-l}$ terms of type $\left(Z^{(q_l)}\star W^{(q_{l-1})}\star\ldots\star W^{(q_1)}\right)(x)$, which is why we choose to {\em label} the coefficients of these terms by the numbers $2^lj-2^{l-1}+1$. This {\em trick} leads to an expression which is relatively easy to work with in the proof, and at the same time has a structure similar to the one featuring in Thm.\ \ref{MainTheorem} for the spectrally positive case. 

Let us now first consider the sign of the $j$-th term when we have $n$ exponentially distributed random variables. From Thm.\ \ref{spectrally negative one exponential time} we see that, for $n=1$, the signs of the coefficients are $-,+$. For $n=2$ and the expression in Eqn.\ (\ref{expression for density spectrally negative two times final}) it turns out  that the signs are $+,-,-,+$. Following how the terms are produced when we go from the step with $n$ exponential times to the step with $n+1$ exponential times (Section~\ref{section spectrally negative} and Fig.\ \ref{tree graph coefficients spectrally negative}), the signs at every step can be represented by the tree graph in Fig.~\ref{tree graph 1}.  In every row, starting from left to right, the nodes represent the sign of every factor when our expression is written in the form presented in Fig.\ \ref{tree graph coefficients spectrally negative}.

% Set the overall layout of the tree
\tikzstyle{level 1}=[level distance=1cm, sibling distance=7cm]
\tikzstyle{level 2}=[level distance=1cm, sibling distance=3cm]
\tikzstyle{level 3}=[level distance=1cm, sibling distance=1.5cm]
\tikzstyle{level 4}=[level distance=1cm, sibling distance=0.5cm]

% Define styles for bags and leafs
\tikzstyle{bag} = [text width=8em, text centered]
\tikzstyle{end} = [circle, minimum width=7pt,fill, inner sep=0pt]

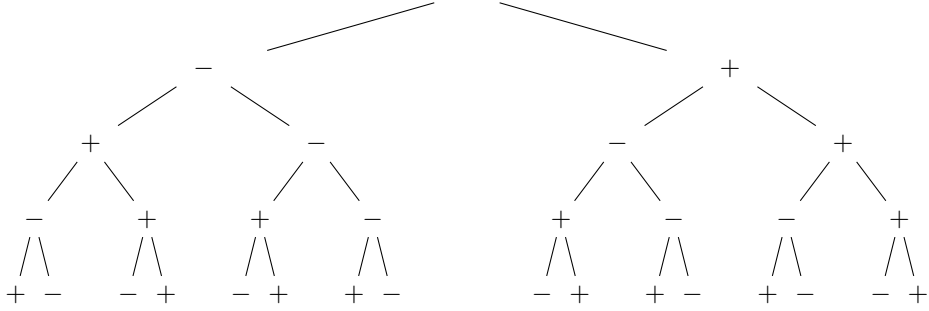
\begin{figure}[H]
\begin{tikzpicture}[grow=down, sloped]
\node[bag] {}
    child {
        node[bag] {$-$}
            child {
                node[bag] {$+$}
                     child{
                        node[bag] {$-$}
                           child{
                                node [bag] {$+$}
                                edge from parent
                                node[above] {$$}
                                node[below]  {$$}
                           }
                            child{
                                node [bag] {$-$}
                                edge from parent
                                node[above] {$$}
                                node[below]  {$$}
                           }
                        edge from parent
                        node[above] {$$}
                        node[below]  {$$}
                     }
                     child{
                       node[bag] {$+$}
                          child{
                                node [bag] {$-$}
                                edge from parent
                                node[above] {$$}
                                node[below]  {$$}
                           }
                            child{
                                node [bag] {$+$}
                                edge from parent
                                node[above] {$$}
                                node[below]  {$$}
                           }
                        edge from parent
                        node[above] {$$}
                        node[below]  {$$}
                     }
                    edge from parent
                    node[above] {$$}
                    node[below]  {$$}
            }
            child {
                node[bag] {$-$}
                    child{
                        node[bag] {$+$}
                            child{
                                node [bag] {$-$}
                                edge from parent
                                node[above] {$$}
                                node[below]  {$$}
                           }
                            child{
                                node [bag] {$+$}
                                edge from parent
                                node[above] {$$}
                                node[below]  {$$}
                           }
                        edge from parent
                        node[above] {$$}
                        node[below]  {$$}
                     }
                     child{
                       node[bag] {$-$}
                              child{
                                node [bag] {$+$}
                                edge from parent
                                node[above] {$$}
                                node[below]  {$$}
                           }
                            child{
                                node [bag] {$-$}
                                edge from parent
                                node[above] {$$}
                                node[below]  {$$}
                           }
                         edge from parent
                         node[above] {$$}
                         node[below]  {$$}
                     }
                edge from parent
                node[above] {$$}
                node[below]  {$$}
            }
            edge from parent
            node[above] {}
            node[below]  {$$}
    }
    child {
        node[bag] {$+$}
            child {
                node[bag] {$-$}
                     child{
                        node[bag] {$+$}
                            child{
                                node [bag] {$-$}
                                edge from parent
                                node[above] {$$}
                                node[below]  {$$}
                           }
                            child{
                                node [bag] {$+$}
                                edge from parent
                                node[above] {$$}
                                node[below]  {$$}
                           }
                        edge from parent
                        node[above] {$$}
                        node[below]  {$$}
                     }
                     child{
                       node[bag] {$-$}
                            child{
                                node [bag] {$+$}
                                edge from parent
                                node[above] {$$}
                                node[below]  {$$}
                           }
                            child{
                                node [bag] {$-$}
                                edge from parent
                                node[above] {$$}
                                node[below]  {$$}
                           }
                        edge from parent
                        node[above] {$$}
                        node[below]  {$$}
                     }
                edge from parent
                node[above] {$$}
                node[below]  {$$}
            }
            child {
                node[bag] {$+$}
                  child{
                        node[bag] {$-$}
                            child{
                                node [bag] {$+$}
                                edge from parent
                                node[above] {$$}
                                node[below]  {$$}
                           }
                            child{
                                node [bag] {$-$}
                                edge from parent
                                node[above] {$$}
                                node[below]  {$$}
                           }
                        edge from parent
                node[above] {$$}
                node[below]  {$$}
                     }
                  child{
                       node[bag] {$+$}
                          child{
                                node [bag] {$-$}
                                edge from parent
                                node[above] {$$}
                                node[below]  {$$}
                           }
                            child{
                                node [bag] {$+$}
                                edge from parent
                                node[above] {$$}
                                node[below]  {$$}
                           }
                     edge from parent
                node[above] {$$}
                node[below]  {$$}
                     }
                edge from parent
                node[above] {$$}
                node[below]  {$$}
            }
            edge from parent
            node[above] {}
            node[below]  {$$}
    };
\end{tikzpicture}
\caption{The sequence of the signs at every step}
\label{tree graph 1}

\end{figure}

We see that row $n+1$ can be obtained from row $n$ if we substitute every $+$ by the pair $-,+$, and every $-$ by the pair $+,-$. 
This holds due to the mechanism analyzed in Section \ref{section spectrally negative}, i.e., the order in which the integrations are carried out. 
Denote by $c^{(j,n)}$ the sign of the $j$-th element in the $n$-th row in the tree. Then $j=1,2,\ldots,2^n$ and $c^{(j,n)}$ corresponds to the sign of the $j$-th coefficient when we have $n$ exponentially distributed random variables in the expression considered in Thm.\ \ref{spectrally negative general case density}.

\begin{proof}[Proof of Lemma \ref{lemma sign 2}]
We prove this lemma by induction.
\begin{enumerate}
\item[(i)] For $n=1$ we have to find the values of $c^{(1,1)}$ and $c^{(2,1)}$. For $j=1$ we need the binary expansion of $2^1 - 1=1$, which has one 1, while for $j=2$ we need the binary expansion of $2^1-2=0$ which has zero ones. Thus we get that $c^{(1,1)}=-1$ and $c^{(2,1)}=+1$.
\item[(ii)] We assume the lemma holds for $n=k$, i.e., for row $k$ of the tree graph in Fig.\ \ref{tree graph 1}. Hence, for $j=1,2,\ldots,2^k$,
\[c^{(j,k)} = (-1)^{\PAR\{\beta_0,\ldots,\beta_{k-1}\}}.\]
From the tree presented above we observe that the $2^n$ signs of an arbitrary row are the same as the last $2^n$ signs of row $n+1$.

Consider now the $(k+1)$-th row of the tree. 
Using the induction hypothesis and the observation above it follows that the lemma holds for the last $2^k$ signs of the $(k+1)$-th row as well. 
We can also see this by observing that for the last $2^k$ signs of the $(k+1)$-th row we are interested in the binary expansions of $2^{k+1} - j$ for $j=2^k+1,\ldots,2^{k+1}$, which is essentially equivalent to considering the binary expansions of $2^k-j$ for $j=1,\ldots,2^k$. This shows that, for $j=1,\ldots,2^k$, 
\[c^{(j,k)}=c^{(j+2^k,k+1)}.\]

 What remains is to prove the lemma for the elements $2^{k}-j$, $j=1,\ldots,2^k$, of the $(k+1)$-th row. At this point, we observe that at an arbitrary row $n$, because of symmetry\[c^{(j,n)} = - c^{(j+2^{n-1},n)}.\]
Hence, the signs of terms $j$ and $j+2^{n-1}$ in the $n$-th row will always be opposite. This yields that in the $(k+1)$-th row we  have, for $j=1,2,\ldots,2^k$,
\[c^{(j,k+1)} = - c^{(j+2^k,k+1)}.\]
But we know that
\[c^{(j+2^k,k+1)} = (-1)^{\PAR\{\beta_0,\ldots,\beta_{k-1},0\}}.\]
We also know that, for $j=1,2,...,2^k$, the binary representation of $2^{k+1}-j$ has one more 1 than the binary representation of $2^{k+1} - 2^k-j = 2^k-j$. This leads to the expression
\[(-1)^{\PAR\{\beta_0,\ldots,\beta_{k-1},1\}} = - (-1)^{\PAR\{\beta_0,\ldots,\beta_{k-1},0\}}\]
and in addition
\[c^{(j,k+1)} = (-1)^{\PAR\{\beta_0,...,\beta_{k}\}},\]
for all $j=1,2,\ldots,2^{k+1}$.
\end{enumerate}
\end{proof}

\noindent
Before proceeding to the proof of Thm.\ \ref{spectrally negative general case density} we make some remarks concerning the result established in Lemma \ref{lemma sign 2}; these remarks are used in the proof of Thm.\ \ref{spectrally negative general case density}.
\begin{remark}{\em 
\label{first remark}
For an arbitrary row $n$ in the tree presented in Fig.\ \ref{tree graph 1}, we have that
\[c^{(1,n)} = - c^{(1,n+1)}.\]
We know that in order to find the first sign of the $n$-th row we must find the binary expansion of the element $2^n - 1$, which has exactly $n$ ones. Thus, for an arbitrary $n\geq1$, we get the expression
\[c^{(1,n)} = (-1) ^n\]
and this also shows the relation mentioned in the remark.}
\end{remark}
\begin{remark}{\em 
\label{second remark}
For $l=1,...,n$ and $j=1,\ldots,2^{n-l}$ we have
\[c^{(2^lj-2^{l-1}+1,n)}=-c^{(2^lj-2^{l-1}+1,n+1)}.\]
To see this we observe, as in the proof of Lemma \ref{lemma sign 2}, that the $2^{n}$ signs of the $n$-th row are the same as the last $2^{n}$ signs of the $(n+1)$-th row. This gives, for $l=1,...,n$ and $j=1,\ldots,2^{n-l}$,
\begin{equation}
\label{signs 3rd remark}
c^{(2^lj-2^{l-1}+1,n)} = c^{(2^lj-2^{l-1}+2^{n-1}+1,n+1)}.
\end{equation}
Using the symmetry of the signs in each row, i.e., for $i=1,\ldots,2^{n-1}$,
$c^{(i,n)} = -c^{(i+2^{n-1},n)}$,
we obtain the equality above.
%\[c^{(2^lj-2^{l-1}+1,n)}=-c^{(2^lj-2^{l-1}+1,n+1)}.\]
}
\end{remark}
\begin{remark}{\em 
\label{third remark}
For the $j$-th sign of the $n$-th row and the $(2^{n}+j)$-th sign of the $(n+1)$-th row we have the following expression,
\begin{equation}
\label{sign first}
c^{(j,n)} = c^{(2^{n}+j,n+1)}.
\end{equation}
For the value of $c^{(j,n)}$ we need the binary representation of $2^{n}-j$ while for the value of $c^{(2^{n}+j,n+1)}$ we need the binary representation of $2^{n+1} - 2^{n}-j = 2^{n}-j$, giving (\ref{sign first}).}
\end{remark}

\begin{proof}[Proof of Theorem \ref{spectrally negative general case density}]
Now, having Lemma \ref{lemma sign 2} at our disposal, we proceed with the proof of Thm.\ \ref{spectrally negative general case density}. Consider the case $n=1$. Then we have one exponentially distributed random variable $T_1$ with parameter $q_1$. Thm.\ \ref{spectrally negative general case density} gives
\[\Pb_x(\Q_{T_1}\in{\rm d}y)=\Big[ c^{(1,1)}q_1W^{(q_1)}(x-y)+L_{(2,1)}^{(1)}Z^{(q_1)}(x)\Big]{\rm d}y.\]
From Lemma \ref{lemma sign 2} we have that $c^{(1,1)}=-1$ and $c^{(2,1)}=+1$. Due to Definition \ref{lambdaexpo}, $L_{(2,1)}^{(1)}=+\Psi(q_1)e^{-\Psi(q_1)y}$. Hence we obtain
\[\Pb_x(\Q_{T_1}\in{\rm d}y)=\Big[-q_1W^{(q_1)}(x-y)+\Psi(q_1)e^{-\Psi(q_1)y}Z^{(q_1)}(x)\Big]{\rm d}y.\]
This is the expression found in (\ref{spectrally negative one exponential time}), and we conclude that our result holds for $n=1$.

\bigskip \noindent
We now assume that Thm.\ \ref{spectrally negative general case density} holds for $n=k-1$. Consider now the case of $n=k$ exponentially distributed random variables, then conditioning on the value of the workload at the first $k-1$ exponential epochs yields
\begin{eqnarray}
 & & \hspace{-5em} \Pb_x(\Q_{T_1+...+T_k}\in{\rm d}y) \nonumber \\
& = & \int_{z=0}^\infty \Pb_x(\Q_{T_1+...+T_{k-1}}\in{\rm d}z)\Pb_z(\Q_{T_{k}}\in{\rm d}y) \nonumber \\
& = & \int_{z=0}^\infty \Biggl( c^{(1,k-1)}\prod_{i=1}^{k-1}q_i\left(W^{(q_{k-1})}\star...\star W^{(q_1)}\right)(x-z) \nonumber \\
& & +\sum_{l=1}^{k-1}\sum_{j=1}^{2^{k-l-1}}L_{(2^lj-2^{l-1}+1,l)}^{(k-1)}(z) 
\left(Z^{(q_l)}\star W^{(q_{l-1})}\star...\star W^{(q_1)}\right)(x) \Biggr) \Pb_z(\Q_{T_{k}}\in{\rm d}y){\rm d}z \nonumber \\
&=& \Bigg[c^{(1,k-1)}\prod_{i=1}^{k-1}q_i\int_{z=0}^\infty\left(W^{(q_{k-1})}\star...\star W^{(q_1)}\right)(x-z)\left(-q_{k}W^{(q_{k})}(z-y)\right){\rm d}z \nonumber\\
& & +c^{(1,k-1)}\prod_{i=1}^{k-1}q_i\Psi(q_{k})e^{-\Psi(q_{k})y}\int_{z=0}^\infty \left(W^{(q_{k-1})}\star...\star W^{(q_1)}\right)(x-z)Z^{(q_{k})}(z){\rm d}z \nonumber \\
& & +\sum_{l=1}^{k-1}\sum_{j=1}^{2^{k-l-1}}\Bigg(\int_0^\infty L_{(2^lj-2^{l-1}+1,l)}^{(k-1)}(z)\left(-q_{k}W^{(q_{k})}(z-y)\right){\rm d}z \nonumber \\
& & +\Psi(q_{k})e^{-\Psi(q_{k})y} \int_0^\infty L_{(2^lj-2^{l-1}+1,l)}^{(k-1)}(z)Z^{(q_{k})}(z){\rm d}z\Bigg)\left(Z^{(q_l)}\star...\star W^{(q_1)}\right)(x)\Bigg]{\rm d}y.  \label{huge expression with 4 integrals}
\end{eqnarray}
We see that we have to evaluate the following four integrals:
\begin{eqnarray}
{\mathcal J}_1 &=& \int_{z=0}^\infty \left(W^{(q_{k-1})}\star...\star W^{(q_1)}\right)(x-z)W^{(q_{k})}(z-y){\rm d}z, \label{integral 1 spectrally negative} \\
{\mathcal J}_2 &=& \int_{z=0}^\infty \left(W^{(q_{k-1})}\star...\star W^{(q_1)}\right)(x-z)Z^{(q_{k})}(z){\rm d}z, \label{integral 2 spectrally negative} \\
{\mathcal J}_3 &=& -q_{k}\int_{z=0}^\infty L_{(2^lj-2^{l-1}+1,l)}^{(k-1)}(z)W^{(q_{k})}(z-y){\rm d}z, \label{integral 3 spectrally negative} \\
{\mathcal J}_4 &=& \Psi(q_{k})e^{-\Psi(q_{k})y}\int_{z=0}^\infty L_{(2^lj-2^{l-1}+1,l)}^{(k-1)}(z)Z^{(q_{k})}(z){\rm d}z. \label{integral 4 spectrally negative}
\end{eqnarray}
These integrals ${\mathcal J}_1,\ldots,{\mathcal J}_4$ can be interpreted by looking at the tree graph in Fig.\ \ref{tree graph coefficients spectrally negative}. ${\mathcal J}_1$ is related to the scenario we are at node 1 of row $k-1$ and we move down and left, while ${\mathcal J}_2$ is related to the scenario we move down and right. ${\mathcal J}_3$ suggests that we are at some node in row $k-1$ which lies in a subtree with root $k-l-1$ rows above the node under consideration and we move left, while ${\mathcal J}_4$ corresponds to the case we move down and right. 

\vb

\noindent
$\circ$ {\it Integral ${\mathcal J}_1$.} \hspace{1mm} By a change of variable argument and using the fact that $W^{(q)}(x)=0$ for $x<0$, we find that
\begin{equation}
\label{J1 integral}
{\mathcal J}_1 = \left(W^{(q_{k})}\star\ldots\star W^{(q_1)}\right)(x-y).
\end{equation}
From (\ref{huge expression with 4 integrals}) we see that the first term of $\Pb_x(\Q_{T_1+\ldots+T_{k}}\in{\rm d}y)$ is equal to
\[-c^{(1,k-1)}\prod_{i=1}^{k}q_i\left(W^{(q_{k})}\star\ldots\star W^{(q_1)}\right)(x-y).\]
Using Remark \ref{first remark} we have $-c^{(1,k-1)}=c^{(1,k)}$ and this shows that we have identified the first term of the expression in Thm.~\ref{spectrally negative general case density}. 

\vb

\noindent
$\circ$ {\it Integral ${\mathcal J}_2$.} \hspace{1mm} 
It is straightforward that
\begin{equation}
\label{J2 integral}
{\mathcal J}_2 = \left(Z^{(q_{k})}\star W^{(q_{k-1})}\star...\star W^{(q_1)}\right)(x).
\end{equation}
Hence, the second term of (\ref{huge expression with 4 integrals}) is equal to
\[c^{(1,k-1)}\prod_{i=1}^{k-1}q_i\Psi(q_{k})\exp[{-\Psi(q_{k})y}]\left(Z^{(q_{k})}\star W^{(q_{k-1})}\star\ldots\star W^{(q_1)}\right)(x).\]
This term corresponds to $l=k$ in the summation of the second term in Thm.~\ref{spectrally negative general case density}.
We need to show that \[L_{(2^{k-1}+1,k)}^{(k)}=c^{(1,k-1)}\prod_{i=1}^{k-1}q_i\Psi(q_{k})\exp[{-\Psi(q_{k})y}].\] First we observe that, for $l=k$ and $j=1$, we have that $m(1,k)=k$ and due to Remark \ref{third remark} we have that $c^{(1,k-1)}=c^{(2^{k-1}+1,k)}$. This shows that
\[c^{(1,k-1)}\prod_{i=1}^{k-1}q_i\Psi(q_{k})e^{-\Psi(q_{k})y}=c^{(2^{k-1}+1,k)}\prod_{\mathclap{\substack{i=1,\\ i\neq m(1,k+1)}}}^{k}q_i\Psi(q_{k})e^{-\Psi(q_{k})y}=L_{(2^{k-1}+1,k)}^{(k)},\]
yielding that the expression for the second term, for $l=k$, agrees with Thm.\ \ref{spectrally negative general case density}.

\vb

The integrals ${\mathcal J}_3$ and ${\mathcal J}_4$ correspond to convolution terms in one of the subtrees mentioned before (see Fig.~\ref{tree graph coefficients spectrally negative}). Therefore, the order of the convolution term does not change and these integrations only change the coefficients $L$. 

\vb

\noindent
$\circ$ {\it Integral ${\mathcal J}_3$.} \hspace{1mm} 
We now show that ${\mathcal J}_3$ equals $L_{(2^lj-2^{l-1}+1,l)}^{(k)}(y)$.
%\[J_3=\int_{z=0}^\infty -q_{k}L_{(2^lj-2^{l-1}+1,l)}^{(k-1)}(z)W^{(q_{k})}(z-y){\rm d}z=L_{(2^lj-2^{l-1}+1,l)}^{(k)}(y).\]
In Fig.\ \ref{tree graph coefficients spectrally negative} we see that when we move to the left the numbering of the coefficients remains the same. When looking in row $k-1$ at the element $L_{(2^lj-2^{l-1}+1,l)}^{(k-1)}$, for $l=1,\ldots,k-1$ and $j=1,\ldots,2^{k-l-1}$, and moving down and left we arrive at the element $L_{(2^lj-2^{l-1}+1,l)}^{(k)}$ which has the same `labeling'. 

Since we have an expression for $L_{(2^lj-2^{l-1}+1,l)}^{(k-1)}(z)$, the induction hypothesis yields
\begin{eqnarray}
{\mathcal J}_3 &=&-c^{(2^lj-2^{l-1}+1,k-1)}\Psi(q_{m(j,l)})\prod_{\mathclap{\substack{i=1,\\ i\neq m(j,l)}}}^{k}q_i\prod_{i=l}^{m(j,l)}\frac{1}{q_{i}-q_{i+1}}\prod_{i=m(j,l)+1}^{k-2}\frac{1}{q_{m(j,l)}-q_{i+1}} \nonumber \\
& & \cdot\int_{z=0}^\infty e^{-\Psi(q_{m(j,l)})z}W^{(q_{k})}(z-y){\rm d}z.  \label{integral J_3}
\end{eqnarray}
By a change of variable and the definition of the $q$-scale function $W^{(q)}(\cdot)$ in Eqn.\ (\ref{W function}), it follows that\begin{equation}
\label{equation J3}
\int_{z=0}^\infty e^{-\Psi(q_{m(j,l)})z}W^{(q_{k})}(z-y){\rm d}z=e^{-\Psi(q_{m(j,l)})y}\frac{1}{q_{m(j,l)}-q_{k}}.
\end{equation}
Substituting Eqn.\ (\ref{equation J3}) into Eqn.\ (\ref{integral J_3}), in combination with the use of Remark \ref{second remark} in the second step, we find
\begin{eqnarray} 
{\mathcal J}_3 &=& -c^{(2^lj-2^{l-1}+1,k)}\Psi(q_{m(j,l)})e^{-\Psi(q_{m(j,l)})y}\prod_{\mathclap{\substack{i=1,\\ i\neq m(j,l)}}}^{k}q_i\prod_{i=l}^{m(j,l)}\frac{1}{q_{i}-q_{i+1}}\prod_{i=m(j,l)+1}^{k-1}\frac{1}{q_{m(j,l)}-q_{i+1}} \nonumber \\
%&=& c^{(2^lj-2^{l-1}+1,k+1)}\Psi(q_{m(j,l)})e^{-\Psi(q_{m(j,l)})y}\prod_{\mathclap{\substack{i=1,\\ i\neq m(j,l)}}}^{k}q_i\prod_{i=l}^{m(j,l)}\frac{1}{q_{i}-q_{i+1}}\prod_{i=m(j,l)+1}^{k-1}\frac{1}{q_{m(j,l)}-q_{i+1}} \nonumber \\
&=&  L_{(2^lj -  2^{l-1}+1,l)}^{(k)}(y), \label{L from J3}
\end{eqnarray}
as desired.

\vb

\noindent
$\circ$ {\it Integral ${\mathcal J}_4$.} \hspace{1mm} 
Before proceeding to the last integral, it is noted that from Fig.~\ref{tree graph coefficients spectrally negative} we observe that when we are at node $L_{(2^lj-2^{l-1}+1,l)}^{(k-1)}$, for some $l=1,\ldots,k-1$, $j=1,\ldots,2^{k-l-1}$ and row $k-1$, and we move down and right, then we obtain the coefficient $L_{(2^lj-2^{l-1}+1+2^{k-1},l)}^{(k)}$. But this is equivalent to $L_{(2^lj-2^{l-1}+1,l)}^{(k)}$ for $j=2^{k-l-1}+1,\ldots,2^{k-l}$. From the definition of the $Z$-scale function in Eqn.\ (\ref{Z scale function}) and interchanging integrals, it follows that 
\[\int_0^\infty e^{-\Psi(q_{m(j,l)})z}Z^{(q_{k})}(z){\rm d}z=\frac{1}{\Psi(q_{m(j,l)})}\frac{q_{m(j,l)}}{q_{m(j,l)}-q_{k}}.\]
Due to the fact that $j=1,\ldots,2^{k-l-1}$ and $l=1,\ldots,k-1$ we have that $m(j,l)\leq k-1$, this leads to
\begin{equation}
\label{equation J4}
{\mathcal J}_4=c^{(2^lj-2^{l-1}+1,k)}\Psi(q_{k})e^{-\Psi(q_{k})y}\prod_{i=1}^{k-1}q_i\prod_{i=l}^{m(j,l)}\frac{1}{q_{i}-q_{i+1}}\prod_{i=m(j,l)+1}^{k-1}\frac{1}{q_{m(j,l)}-q_{i+1}}.
\end{equation}
The last property we should verify is that, for $l=1,\ldots,k-1$ and $j=1,\ldots,2^{k-l-1}$,
it holds that ${\mathcal J}_4=L_{(2^lj-2^{l-1}+1+2^{k-1},l)}^{(k)}$.
But we observe that this is equivalent to showing that, for $l=1,\ldots,k-1$ and $j=2^{k-l-1}+1,\ldots,2^{k-l}$,
\begin{equation}
\label{equation2 J4}
{\mathcal J}_4=L_{(2^lj-2^{l-1}+1,l)}^{(k)}.
\end{equation}
For the values of $l$ and $j$ we consider, we have that $m(j,l)=k$ and thus, by Remark \ref{third remark}, we see that Eqn.~(\ref{equation J4}) gives Eqn.\ (\ref{equation2 J4}).

\bigskip \noindent
Returning to (\ref{huge expression with 4 integrals}), and using the expressions found in (\ref{J1 integral}), (\ref{J2 integral}), (\ref{L from J3}) and (\ref{equation2 J4}), we find 
\begin{eqnarray*}
 & & \hspace{-5em} \Pb_x(\Q_{T_1+...+T_k}\in{\rm d}y) \\
&=& \Bigg[ c^{(1,k)}\prod_{i=1}^{k}q_i \left( W^{(q_{k})}\star\ldots\star W^{(q_1)}\right)(x-y) \\
 & & + c^{(2,k)}\prod_{i=1}^kq_i\Psi(q_{k})e^{-\Psi(q_{k})y}\left(Z^{(q_{k})}\star\ldots\star W^{(q_1)}\right)(x) \\
& & + \sum_{l=1}^{k-1}\sum_{j=1}^{2^{k-l-1}}\left( L_{(2^lj-2^{l-1}+1,l)}^{(k)}(y)+L_{(2^l(j+2^{k-l-1})-2^{l-1}+1,l)}^{(k)}\right)\left(Z^{(q_l)}\star...\star W^{(q_1)}\right)(x)\Bigg]{\rm d}y.
\end{eqnarray*}
The last expression can be written more compactly, yielding
\begin{eqnarray*}
\Pb_x(\Q_{T_1+\ldots+T_k}\in{\rm d}y) &=& \Bigg[ c^{(1,k)}\prod_{i=1}^{k}q_i \left( W^{(q_{k})}\star...\star W^{(q_1)}\right)(x-y)\\
& & +\sum_{l=1}^{k}\sum_{j=1}^{2^{k-l}}  L_{(2^lj-2^{l-1}+1,l)}^{(k)}(y) \left(Z^{(q_l)}\star\ldots\star W^{(q_1)}\right)(x)\Bigg]{\rm d}y.
\end{eqnarray*}
This concludes the proof of Thm.~\ref{spectrally negative general case density}. 
\end{proof}

\begin{proof}[Proof of Corollary \ref{general triple transform}]
The expression in Corollary \ref{general triple transform} is derived as a straightforward application of the result established in Thm.\ \ref{spectrally negative general case density}. For the triple transform we know that 
\[\int_0^\infty e^{-\beta x}\E_x e^{-\alpha \Q_{T_1+\ldots+T_n}}{\rm d}x = \int_{x=0}^\infty e^{-\beta x}\int_{y=0}^\infty e^{-\alpha y}\Pb_x(\Q_{T_1+...+T_n}\in{\rm d}y){\rm d}x.\]
Using Thm.\ \ref{spectrally negative general case density} we see that 
\begin{equation*}
\int_0^\infty e^{-\beta x}\E_xe^{-\alpha\Q_{T_1+\ldots+T_n}}{\rm d}x = c^{(1,n)}\prod_{i=1}^nq_i\int_{x=0}^\infty\int_{y=0}^\infty e^{-\beta x}e^{-\alpha y}\left(W^{(q_n)}\star...\star W^{(q_1)}\right)(x-y){\rm d}y{\rm d}x
\end{equation*}
\begin{equation}
\label{goal}
+\sum_{l=1}^n\sum_{j=1}^{2^{n-l}}\int_{x=0}^\infty e^{-\beta x}\left(Z^{(q_l)}\star W^{(q_{l-1})}\star\ldots\star W^{(q_1)}(x)\right){\rm d}x\cdot\int_{y=0}^\infty e^{-\alpha y}L_{(2^lj-2^{l-1}+1,l)}^{(n)}(y){\rm d}y,
\end{equation}
where the coefficients $L_{(2^lj-2^{l-1}+1,l)}^{(n)}$ are given in Definition \ref{lambdaexpo}.
We see that we have to work with the following three integrals:
\begin{eqnarray*}
%\label{first type of integral}
{\mathcal K}_1 &=& \int_{x=0}^\infty \int_{y=0}^\infty e^{-\beta x}e^{-\alpha y}\left(W^{(q_n)}\star\ldots\star W^{(q_1)}\right)(x-y){\rm d}y{\rm d}x, \\
%\label{second type of integral}
{\mathcal K}_2 &=& \int_{x=0}^\infty e^{-\beta x}\left(Z^{(q_l)}\star W^{(q_{l-1})}\star\ldots\star W^{(q_1)}(x)\right){\rm d}x, \\
%\label{third type of integral}
{\mathcal K}_3 &=& \int_{y=0}^\infty e^{-\alpha y}L_{(2^lj-2^{l-1}+1,l)}^{(n)}(y){\rm d}y.
\end{eqnarray*}
Relying on the properties of the $q$-scale functions and after some straightforward calculus, we find \[
{\mathcal K}_1 = \frac{1}{\alpha+\beta}\prod_{i=1}^n\frac{1}{\Phi(\beta)-q_i},
\:\:\:\:\:\:\:
{\mathcal K}_2 =  \frac{\Phi(\beta)}{\beta}\prod_{i=1}^l\frac{1}{\Phi(\beta)-q_i}\]
and 
\[{\mathcal K}_3 = c^{(2^lj-2^{l-1}+1,n)}\frac{\Psi(q_{m(j,l)})}{\alpha+\Psi(q_{m(j,l)})}\prod_{i=1,i\neq m(j,l)}^n q_i \prod_{i=l}^{m(j,l)} \frac{1}{q_{i}-q_{i+1}}\prod_{i=m(j,l)+1}^{n-1}\frac{1}{q_{m(j,l)}-q_{i+1}}.\]
We conclude that by substitution of ${\mathcal K}_1,{\mathcal K}_2$, and ${\mathcal K}_3$ in (\ref{goal}), we have established Cor.\ \ref{general triple transform}.
\end{proof}

\section{Conclusion and Discussion} \label{sec:concl}

In this paper we have analyzed the transient behavior of spectrally one-sided L\'evy-driven queues. We considered the joint behavior of  $\Q_{T_1},\Q_{T_1+T_2},\ldots,\Q_{T_1+\ldots+T_n}$ where $T_i$ is exponentially distributed with parameter $q_i$, and we specifically focused on $\Q_{T_1+\ldots+T_n}$. From the main results it follows that this transient behavior obeys an elegant and appealing tree structure. Interestingly, some numerical illustrations showed that $\E_x \Q_t$ is first decreasing in $t$ and then converges to the steady-state workload from below in case $x$ is chosen `slightly' above the stationary workload.

We have restricted ourselves to analyzing $\Q_{T}$ with $T$ distributed as the sum of $n$ independent exponential random variables, but our result is readily extended to that of $\Q_T$ with $T$ obeying a {\it Coxian} distribution. This is a particularly useful fact, as any distribution on the positive half line 
can be approximated arbitrarily closely  by a sequence of Coxian distributions, see e.g.\ \cite[Section III.4]{Asm}. In more detail, the analysis looks as follows. Consider the situation that $T$ follows a Coxian distribution with $n$ phases; we let the length of phase $i$ be drawn from an exponential distribution with parameter $q_i$, and we let the probability of moving from phase $i$ to $i+1$  be $p_i$ (with the convention that $p_n = 0$). Then, for the spectrally-positive case, 
\[  \E_x e^{-\alpha \Q_T} = \sum_{k=1}^n  (1-p_k) \prod_{i=1}^{k-1} p_i   \cdot \E_x  e^{-\alpha \Q_{T_1 + \ldots + T_k}}, \] 
where $\E_x  e^{-\alpha \Q_{T_1 + \ldots + T_k}}$ is as obtained in Thm.~\ref{MainTheorem}. The density in the spectrally-negative case follows by a similar argument. 

To conclude, we like to mention some topics that are of interest for future investigation. Although the class of Coxian distributions for the epoch $T$ is sufficiently rich, it might of interest to study the behavior of $\Q_T$ if $T$ has a general phase-type distribution. Specifically, we did not explicitly derive the results in case some parameters $q_i$ are identical. This follows as a direct application of  l'H\^{o}pital's rule, but the expressions tend to become cumbersome. 
%The case of phase-type distribution is also of interest for approximating $\Q_t$ in the spirit of Tijms~\cite{Tijms2007}. 
Another open question concerns the transient behavior for spectrally two-sided L\'evy processes. 
Finally, we expect that the transient analysis presented here may be applicable in inference procedures, to estimate the queue's L\'evy input process from a finite number of successive workload observations. 

{\small

}
\end{document}